\theoremstyle{plain}
\newtheorem{thm}{Theorem}[section]
\newtheorem{lem}[thm]{Lemma}
\newtheorem{prop}[thm]{Proposition}
\theoremstyle{definition}
\theoremstyle{remark}
\newtheorem{remark}{Remark}
\newtheorem*{remark*}{Remark}
\numberwithin{equation}{section}
        \newcommand{\field}[1]{{\mathbb{#1}}}
        \newcommand{\NN}{\field{N}}
        \newcommand{\RR}{\field{R}}
\begin{document}

\title[Eigenvalue estimates
for a three-dimensional magnetic operator]{Eigenvalue estimates for
a three-dimensional magnetic Schr\"odinger operator}

\author{Bernard Helffer}

\address{D\'epartement de Math\'ematiques, B\^atiment 425, Univ
Paris-Sud et CNRS, F-91405 Orsay C\'edex, France}
\email{Bernard.Helffer@math.u-psud.fr}

\author{Yuri A. Kordyukov}
\address{Institute of Mathematics, Russian Academy of Sciences, 112 Chernyshevsky
str. 450008 Ufa, Russia} \email{yurikor@matem.anrb.ru}

\thanks{Y.K. is partially supported by the Russian Foundation of Basic
Research (grants 09-01-00389 and 12-01-00519).}

\begin{abstract}
We consider a magnetic Schr\"odinger operator
$H^h=(-ih\nabla-\vec{A})^2$ with the Dirichlet boundary conditions
in an open set $\Omega \subset {\mathbb R}^3$, where $h>0$ is a
small parameter. We suppose that the minimal value $b_0$ of the
module $|\vec{B}|$ of the vector magnetic field $\vec{B}$ is
strictly positive, and there exists a unique minimum point of
$|\vec{B}|$, which is non-degenerate. The main result of the paper
is upper estimates for the low-lying eigenvalues of the operator
$H^h$ in the semiclassical limit. We also prove the existence of an
arbitrary large number of spectral gaps in the semiclassical limit
in the corresponding periodic setting.
\end{abstract}

\subjclass[2000]{35P20, 35J10, 47F05, 81Q10}

\keywords{magnetic Schr\"odinger operator, eigenvalue asymptotics,
magnetic wells, semiclassical limit, spectral gaps}

\date{}
 \maketitle

\section{Preliminaries and main results}

\subsection{Main assumptions}
We would like to analyze the asymptotic behavior, in the
semiclassical regime, of the low-lying eigenvalues of the Dirichlet
realization of the magnetic Schr\"odinger operator in an open set
$\Omega$ in ${\mathbb R}^3$:
\[
H^h=\left(hD_{X_1}-A_1(X)\right)^2+\left(hD_{X_2}-A_2(X)\right)^2+\left(hD_{X_3}-A_3(X)\right)^2,
\]
where $\vec{A}=(A_1,A_2,A_3)\in C^\infty(\bar \Omega , {\mathbb
R}^3)$ is a magnetic potential and $h>0$ is a small parameter. We
will denote the coordinates in ${\mathbb R}^3$ as
$X=(X_1,X_2,X_3)=(x,y,z)\,$. Let $\vec{B}=\operatorname{rot} \vec
A=(B_1,B_2,B_3)$ be the corresponding vector magnetic field:
\[
B_{1}=\frac{\partial A_3}{\partial y}-\frac{\partial A_2}{\partial
z}\,,\quad B_{2}=\frac{\partial A_1}{\partial z}-\frac{\partial
A_3}{\partial x}\,,\quad B_{3}=\frac{\partial A_2}{\partial
x}-\frac{\partial A_1}{\partial y}\,.
\]
Put
\[
b_0=\min \{|\vec B(X)|\, :\, X\in \Omega\}.
\]
We assume that there exist a (connected) bounded domain
$\Omega_1\subset\subset \Omega$ and a constant $\epsilon_0>0$ such
that
\begin{equation}\label{ass1}
 |\vec B(X)| \geq b_0+\epsilon_0, \quad x\not\in \Omega_1\,.
\end{equation}
We also assume that:
\begin{equation}\label{ass2}
b_0>0\,,
\end{equation}
 and that there exists a unique minimum $X_0\in \Omega$ such that
  $|\vec B(X_0)|=b_0$, which is non-degenerate: in some neighborhood of
  $X_0$
\begin{equation}\label{ass3}
C^{-1}|X-X_0|^2\leq |\vec B(X)|-b_0 \leq C |X-X_0|^2\,.
\end{equation}

\subsection{Main statement}~\\
Denote
\[
d=\det {\rm Hess}\,|\vec B|(X_0)\,, \quad a=\frac{1}{2 b_0^2}({\rm
Hess} |\vec B|\, {\vec B}\cdot \vec B) (X_0)\,.
\]
Denote by $\lambda_0(H^h)\leq \lambda_1(H^h)\leq \lambda_2(H^h)\leq
\ldots$ the eigenvalues of the operator $H^h$.

\begin{thm}\label{t:main}
Under current assumptions, for any natural $m$, there exist $C_m>0$
and $h_m>0$ such that, for any $h\in (0,h_m]\,$,
\[
\lambda_m(H^h) \leq h b_0 +h^{3/2}
a^{1/2}+h^2\left[\frac{1}{2b_0}\left(\frac{d}{2a}\right)^{1/2}(2m+1)+
\nu \right]+C_m h^{9/4}\,,
\]
where $\nu $ is some explicit constant (which means that it is given
by a rather complicated explicit formula).
\end{thm}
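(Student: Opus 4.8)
This is an upper bound, so the natural route is the min--max principle: for each $m$ it is enough to exhibit an $(m+1)$-dimensional subspace $\cE_m$ of the form domain of $H^h$ on which the quadratic form of $H^h$ satisfies $q^h(u)\le\Lambda^h_m\|u\|^2$, where $\Lambda^h_m$ denotes the claimed right-hand side. Since $|\vec B|\ge b_0+\epsilon_0$ off the bounded set $\Omega_1$ and, by \eqref{ass2}--\eqref{ass3}, $|\vec B|$ has a unique non-degenerate minimum at $X_0\in\Omega$, all trial states may be taken supported in a fixed small ball around $X_0$; for an upper bound no Agmon-type decay estimate is needed. After translating $X_0$ to the origin and rotating so that $\vec B(0)=b_0\vec e_3$, one fixes a gauge with $A_3\equiv0$ and $\vec A=(0,b_0x_1,0)+O(|x|^2)$ near $0$ -- both possible because $\partial_3A_1(0)=B_2(0)=0$ and $\partial_3A_2(0)=-B_1(0)=0$ -- so that
\[
H^h=(hD_{x_1})^2+(hD_{x_2}-b_0x_1)^2+(hD_{x_3})^2+R^h ,
\]
with $R^h$ collecting the terms that become higher order after the rescaling below.

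The trial states encode a three-scale structure. In the two directions transverse to $\vec B(0)$ the natural length is $h^{1/2}$, and there the leading operator is a harmonic oscillator whose bottom is the lowest Landau level $hb_0$; keeping only its ground state yields the term $hb_0$ and reduces the transverse kinetic energy to the effective potential $h|\vec B|$. Along the field line through $X_0$, \eqref{ass3} makes $|\vec B|$ restricted to that line a non-degenerate well, $|\vec B|=b_0+as^2+O(s^3)$; the operator $h^2D_{x_3}^2+h|\vec B|$ then lives at scale $x_3\sim h^{1/4}$, has ground energy $hb_0+h^{3/2}a^{1/2}+O(h^2)$, and keeping only its ground state produces $h^{3/2}a^{1/2}$. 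What remains is the slow transverse drift of the guiding centre, whose two coordinates are canonically conjugate with effective Planck constant $h/b_0$; eliminating the field-line oscillator (which couples to the drift at order $h^{7/4}$ and hence affects the drift energies at order $h^2$) replaces the transverse Hessian of $|\vec B|$ at $X_0$ by its Schur complement, whose determinant equals $d/(2a)$. Hence the guiding centre feels a two-dimensional oscillator with energies $\tfrac{h^2}{2b_0}(d/2a)^{1/2}(2\ell+1)$, and choosing the $m+1$ trial states to carry the excitations $\ell=0,\dots,m$ gives the term $\tfrac{1}{2b_0}(d/2a)^{1/2}(2m+1)\,h^2$; every other order-$h^2$ contribution (Landau-level corrections from the field-line fluctuations, curvature of the field lines, the off-diagonal part of ${\rm Hess}\,|\vec B|(X_0)$, higher Taylor coefficients of $\vec A$) is $\ell$-independent and is collected into $\nu$.

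Concretely I would build these states as truncated formal series: in the rescaled variables $y=(h^{-1/2}x_1,h^{-1/2}x_2,h^{-1/4}x_3)$,
\[
u^h_\ell=\chi(x)\,e^{i\varphi(x)/h}\sum_{0\le j\le J}h^{j/4}\,a_{\ell,j}(y),
\]
with $\chi$ a fixed cutoff near $0$, $\varphi$ a real phase to be chosen, and $a_{\ell,0}$ the product of the lowest Landau state, the Hermite function $\mathcal H_0$ in $h^{-1/4}x_3$, and the Hermite function $\mathcal H_\ell$ in a guiding-centre variable built from $(x_1,x_2)$. Inserting $u^h_\ell$ into $(H^h-\Lambda^h_\ell)u^h_\ell$ and expanding in powers of $h^{1/4}$ gives a hierarchy of transport-type equations whose successive solvability (Fredholm) conditions are orthogonality of the right-hand side to the relevant Gaussian/Hermite states; these are met by choosing $a_{\ell,j}$ and the next term of $\Lambda^h_\ell$, the odd-order contributions dropping by parity or by the choice of $\varphi$. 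Pushing this far enough to reach the $h^2$ term one obtains
\[
\Lambda^h_\ell=hb_0+h^{3/2}a^{1/2}+h^2\Big[\tfrac{1}{2b_0}(d/2a)^{1/2}(2\ell+1)+\nu\Big],
\qquad
\|(H^h-\Lambda^h_\ell)u^h_\ell\|\le C h^{9/4}\|u^h_\ell\| .
\]

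Since the leading terms $a_{\ell,0}$ differ only through mutually orthogonal Hermite functions in the guiding-centre variable, the family $(u^h_\ell)_{0\le\ell\le m}$ is almost orthonormal, with off-diagonal Gram entries as small as the quasimode errors, and the standard spectral lemma (if $H=H^*$, $\|(H-\Lambda_\ell)u_\ell\|\le\varepsilon_\ell\|u_\ell\|$, and the Gram matrix is close to the identity, then $\lambda_m(H)$ is bounded by $\max_\ell\Lambda_\ell$ up to an error governed by $\max_\ell\varepsilon_\ell$ and the overlaps) gives $\lambda_m(H^h)\le\max_\ell\Lambda^h_\ell+C_mh^{9/4}=\Lambda^h_m+C_mh^{9/4}$, using that $\ell\mapsto\Lambda^h_\ell$ is increasing. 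The main obstacle is the third step: one must run the anisotropic ($h^{1/2}$ versus $h^{1/4}$) multiscale expansion so that the three reductions decouple cleanly through order $h^2$, and in particular verify that the $\ell$-dependent part of the order-$h^2$ effective operator is exactly the oscillator with frequency $\tfrac{1}{2b_0}(d/2a)^{1/2}$; this forces one to keep exact track of the coupling between the field-line oscillator and the guiding-centre drift, of the non-block-diagonal part of ${\rm Hess}\,|\vec B|(X_0)$, and of the precise normalization of the gauge and coordinates -- and it is also where the explicit but unwieldy value of $\nu$ is read off.
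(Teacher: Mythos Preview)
Your outline is correct and matches the paper's route: Theorem~\ref{t:main} is deduced from the quasimode Theorem~\ref{t:qmodes} (take $j=k=0$ and $\ell=0,\dots,m$) via min--max on the almost-orthonormal family, exactly as you describe, and Theorem~\ref{t:qmodes} is proved by the anisotropic rescaling $(x,y,z)\mapsto(h^{1/2}\tilde x,h^{1/2}\tilde y,h^{1/4}\tilde z)$, expansion in powers of $h^{1/4}$, and the hierarchy of Fredholm conditions you sketch. The only notable differences are cosmetic: the paper works in the gauge $A_1\equiv0$ (not $A_3\equiv0$), and instead of carrying a WKB phase $e^{i\varphi/h}$ and a ``guiding-centre variable built from $(x_1,x_2)$'', it separates cyclotron and drift by explicit metaplectic moves (partial Fourier transform $y\to\eta$, translation $x\mapsto x-\eta/b_0$, and a quadratic gauge factor), after which the drift operator $\mathfrak h_5$ is a one-variable oscillator in $\eta$ whose frequency $\sqrt{AC-B^2}=\tfrac{2k+1}{2b_0}\sqrt{d/(2a)}$ reproduces your Schur-complement computation.
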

Theorem \ref{t:main} will be based on a construction of quasimodes.
\begin{thm}\label{t:qmodes}
Under current assumptions, for any natural $j$, $k$ and $m$, there
exist $\phi^h_{j,k,m}\in C^\infty_c(\Omega)$, $C_{j,k,m}>0$ and
$h_{j,k,m}>0$ such that
\begin{equation}\label{e:orth}
(\phi^h_{j_1,k_1,m_1},\phi^h_{j_2,k_2,m_2})
=\delta_{j_1j_2}\delta_{k_1k_2}\delta_{m_1m_2}+ \mathcal
O_{j_1,k_1,m_1,j_2,k_2,m_2}(h)\,,
\end{equation}
and, for any $h\in (0,h_{j,k,m}]$,
\begin{equation}\label{e:Hh}
\|H^h\phi^h_{j,k,m}- \mu_{j,k,m}^h \phi^h_{j,k,m}\|\leq C_{j,k,m}\,
h^{\frac{9}{4}}\|\phi^h_{j,k,m}\|,\,
\end{equation}
where
\[
\mu_{j,k,m}^h=\mu_{j,k,m,0}h+\mu_{j,k,m,2} h^{\frac 3
2}+\mu_{j,k,m,4} h^2\,.
\]
with
\[
\mu_{j,k,m,0} = (2k+1)b_0\,,\quad
\mu_{j,k,m,2}=(2j+1)(2k+1)^{1/2}a^{1/2}\,,
\]
and
\[
\mu_{j,k,m,4}=\frac{1}{2b_0}\left(\frac{d}{2a}\right)^{1/2}(2m+1)(2k+1)+
\nu(j,k)\,,
\]
where $\nu(j,k)$ has the form
\[
\nu(j,k)=\nu_{22}(2k+1)^2+\nu_{11}(2j+1)^2+\nu_0
\]
with some explicit constants $\nu_0,\nu_{11},\nu_{22}$.
\end{thm}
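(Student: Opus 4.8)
The plan is to build $\phi^h_{j,k,m}$ as a localized WKB-type state in which three harmonic oscillators are generated, one at each of the orders $h$, $h^{3/2}$, $h^2$: a \emph{cyclotron} (Landau) oscillator in the two directions transverse to $\vec B$, responsible for $\mu_{j,k,m,0}h=(2k+1)b_0h$; a \emph{parallel} oscillator along the field line through $X_0$, governed by the second derivative $2a$ of $|\vec B|$ in the direction of $\vec B(X_0)$ and responsible for $\mu_{j,k,m,2}h^{3/2}=(2j+1)(2k+1)^{1/2}a^{1/2}h^{3/2}$; and a \emph{guiding-centre} oscillator in the transverse plane, governed by the transverse block of $\operatorname{Hess}|\vec B|(X_0)$ (whose determinant equals $d/2a$) and responsible for $\tfrac1{2b_0}(d/2a)^{1/2}(2m+1)(2k+1)h^2$.

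Since, as the construction confirms, the quasimode concentrates in a box of size $h^{1/2}\times h^{1/2}\times h^{1/4}$ about $X_0$, multiplying by a fixed cut-off $\chi\in C_c^\infty(\Omega_1)$ equal to $1$ near $X_0$ changes every relevant quantity by $\mathcal O(h^\infty)$; so I work near $X_0$ in coordinates $q=(q_1,q_2,q_3)$ centred at $X_0$, with the field line through $X_0$ tangent to the $q_3$-axis and in a gauge adapted to $\vec B$, in which $H^h$ has a controlled Taylor expansion whose principal part, under the anisotropic rescaling $q_\perp=h^{1/2}b_0^{-1/2}y_\perp$, $q_3=h^{1/4}y_3$, is $hb_0$ times the model Landau operator $\mathcal L=(D_{y_1}+\tfrac12 y_2)^2+(D_{y_2}-\tfrac12 y_1)^2$ on $L^2(\mathbb R^2_{y_\perp})$. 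Then I insert
\[
\phi^h_{j,k,m}=\chi(q)\,h^{-5/8}\,\Psi(y),\qquad \Psi\sim\sum_{\ell\ge0}h^{\ell/4}\Psi_\ell(y),\qquad \mu^h_{j,k,m}\sim\sum_{\ell\ge0}h^{1+\ell/4}\mu_{(\ell)},
\]
into $(H^h-\mu^h)\phi^h$ and match powers of $h^{1/4}$. The order-$h$ equation $(\mathcal L-\mu_{(0)}/b_0)\Psi_0=0$ forces $\mu_{(0)}=(2k+1)b_0$ and $\Psi_0\in\mathcal E_k\otimes L^2(\mathbb R_{y_3})$, where $\mathcal E_k=\ker(\mathcal L-(2k+1))$ carries the standard orthonormal basis $(g_{k,m})_{m\ge0}$ of guiding-centre Hermite functions; every further order has the form $(\mathcal L-(2k+1))\Psi_\ell=F_\ell$, solvable iff $F_\ell(\cdot,y_3)\perp\mathcal E_k$ for all $y_3$, and these Fredholm conditions, imposed in turn, fix the $\mu_{(\ell)}$ and progressively pin down $\Psi_0$.

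The first nontrivial solvability condition is the eigenvalue equation of the one-dimensional oscillator $D_{y_3}^2+(2k+1)a\,y_3^2$ (the part of $|\vec B|$ quadratic along the field, after rescaling), giving $\mu_{(2)}=(2j+1)(2k+1)^{1/2}a^{1/2}$ and the Hermite factor $h_j(y_3)$, while $\mu_{(1)}=0$ by parity. One order further the condition becomes the eigenvalue equation for the restriction to $\mathcal E_k$ of the transverse quadratic form $\tfrac{(2k+1)}{2b_0}h^2\langle\operatorname{Hess}|\vec B|(X_0)\,y_\perp,y_\perp\rangle$; this restriction is a \emph{single} oscillator, because in the rescaled variables the two guiding-centre coordinates obey $[X_{\mathrm{gc}},Y_{\mathrm{gc}}]=i$, and its spectrum is $\tfrac1{2b_0}(d/2a)^{1/2}(2m+1)(2k+1)h^2$, completing the determination $\Psi_0=g_{k,m}\otimes h_j$. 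Assembling the remaining $h^2$-contributions --- higher Taylor terms of $|\vec B|$ acting by first- and second-order perturbation of the parallel oscillator (which produces $\nu_{11}(2j+1)^2$), the $h^2$ corrections coming from the variation of the Landau projector and from the cyclotron averaging in the effective potential (which produces $\nu_{22}(2k+1)^2$, the square reflecting $\langle r_c^2\rangle_k\propto 2k+1$), plus the constants --- gives $\nu(j,k)=\nu_{22}(2k+1)^2+\nu_{11}(2j+1)^2+\nu_0$, and the absence of $h^{5/4}$ and $h^{7/4}$ in $\mu^h$ is again a parity statement. I run the recursion until $\Psi_0,\dots,\Psi_N$ and $\mu_{(0)},\dots,\mu_{(4)}$ are fixed, with $N$ large enough that the truncated and re-localised ansatz satisfies \eqref{e:Hh}; then \eqref{e:orth} follows because for distinct triples the leading profiles $g_{k,m}\otimes h_j$ are orthonormal while all corrections carry positive powers of $h$ (choosing each $\Psi_\ell$, $\ell\ge1$, orthogonal to $\Psi_0$ yields the stated $\mathcal O(h)$).

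The main obstacle is the bookkeeping of this triple-scale expansion. One must arrange the coordinates and the gauge so that the expansion of $H^h$ in powers of $h^{1/4}$ is genuinely hierarchical --- in particular so that the $q_3$-dependent shifts of the transverse vector potential, which a priori would enter already at the level of $hb_0\mathcal L$, are absorbed and the principal part is exactly $hb_0\mathcal L$ with no $y_3$-coupling; one must check that the Fredholm alternative is solvable at each order against the infinite-dimensional $\mathcal E_k$; and one must verify that the parallel and guiding-centre oscillators, whose leading frequencies are of different orders ($h^{3/2}$ versus $h^2$), decouple to the accuracy required, so that their contributions simply add. The most laborious step is the explicit evaluation of $\nu_{22},\nu_{11},\nu_0$, which forces one to carry both the Taylor expansion of the operator and the perturbation expansion of the parallel oscillator to the order at which every $h^2$-term is accounted for.
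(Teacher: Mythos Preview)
Your plan is correct and follows the same strategy as the paper: an anisotropic rescaling $(h^{1/2},h^{1/2},h^{1/4})$, a formal expansion in powers of $h^{1/4}$, and the successive appearance of three harmonic oscillators (Landau, parallel, guiding-centre) at orders $h$, $h^{3/2}$, $h^2$, with $\mu_{(1)}=\mu_{(3)}=0$ by parity.

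The one difference is representational rather than structural. You keep the two-dimensional Landau operator $\mathcal L$ in symmetric gauge and work with its infinite-dimensional kernel $\mathcal E_k$ and the guiding-centre basis $g_{k,m}$. The paper instead applies a metaplectic transformation (partial Fourier transform in $y$, translation $x\mapsto x-\eta/b_0$, then a gauge factor) that turns $\mathcal L$ into the one-variable oscillator $D_x^2+b_0^2x^2$; then $\mathcal E_k$ becomes $\phi_k(x)\cdot L^2(\mathbb R_\eta)$ and the guiding-centre coordinates are just $(\eta,D_\eta)$. This buys the paper completely explicit one-dimensional formulas at every step (their effective operators $\mathfrak h_3$ and $\mathfrak h_5$), while your language stays closer to the physics and keeps the geometry visible. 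The two routes are unitarily equivalent.

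One point to be careful about: the quadratic form governing the guiding-centre oscillator at order $h^2$ is not the bare transverse $2\times2$ block of $\operatorname{Hess}|\vec B|(X_0)$ but its \emph{Schur complement} with respect to the $(3,3)$ entry. The cross derivatives $\partial^2_{x_iz}|\vec B|(X_0)$ couple the parallel and guiding-centre modes at this order, and eliminating the faster parallel mode produces precisely the Schur-complement correction; that is why the relevant determinant is $d/(2a)=\det(\operatorname{Hess}|\vec B|)/\partial^2_{zz}|\vec B|$, not the determinant of the transverse block itself. In the paper this emerges automatically from the coefficients of $\mathfrak h_5$; in your framework it will appear when you push the Fredholm condition through the intermediate step that determines $v_1$ and $\mu_{(3)}=0$. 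Your final paragraph shows you anticipate this coupling, but the formula you wrote for the transverse form would miss it.
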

\begin{remark}~\\
It is conjectured that
\[
\lambda_m(H^h) \geq hb_0
+h^{3/2}a^{1/2}+h^2\left[\frac{1}{2b_0}\left(\frac{d}{2a}\right)^{1/2}(2m+1)+
\nu \right]- C_m h^{9/4}\,.
\]
\end{remark}

\subsection{History of the problem}~\\
May be at the level of the mathematical problem, the starting
reference for the spectral analysis of self-adjoint realizations of
the magnetic Schr\"odinger operator is the paper by
Avron-Herbst-Simon \cite{AvHeSi} where the role of the module of the
magnetic field in the three-dimensional case appears for the first
time. Further investigations were inspired by R. Montgomery
\cite{Mont}, who was asking ``Can we hear the locus of the magnetic
field'' (by analogy with the celebrated question by M. Kac). In
\cite{Mont}, this question was studied for the two-dimensional
magnetic Schr\"odinger operator. Motivated by the question of R.
Montgomery, the first author and Mohamed in \cite{HM} investigated
the asymptotic behavior of the low-lying eigenvalues of the
Dirichlet realization of the magnetic Schr\"odinger operator in the
case when the magnetic field vanishes. This study was continued more
recently in \cite{PaKw,Qmath10,miniwells,DR} (see also
\cite{luminy}). The case when the magnetic field never vanishes was
analyzed in detail for the Dirichlet realization in the
two-dimensional case in \cite{HM01} and more recently in
\cite{HKI,HKII}. Moreover, there is a big literature devoted to the
spectral analysis of the Neumann realization because of its
connection with problems in superconductivity (see
\cite{Fournais-Helffer:book} and the references therein). Finally,
we do not also give a complete description of the semi-classical
results obtained in the case when an electric potential $V$ is
creating the main localization and refer to \cite{He} and
\cite{DiSj} for a presentation and references therein.

The paper is organized as follows. In Section~\ref{s:prelim1}, we
make some simplifications of the operator $H^h$, using linear
changes of variables and gauge transformations, and present the
normal form appearing in the generic situation.
Section~\ref{s:prelim2} is devoted to the analysis of the action of
the metaplectic group on our models and to the introduction of a
suitable rescaling of the problem.  In Section~\ref{s:eigenmodes} we
construct approximate eigenfunctions of the operator $H^h$,
completing the proof of Theorem~\ref{t:qmodes}. In
Section~\ref{s:gaps} we consider the case when the magnetic field is
periodic. We combine the constructions of approximate eigenfunctions
given in Sections~\ref{s:eigenmodes} with the results of
\cite{diff2006} to prove the existence of arbitrary large number of
gaps in the spectrum of the periodic operator $H^h$ in the
semiclassical limit.

\section{Preliminaries for the proof: Normal form}~
\label{s:prelim1} In this section, we make some simplifications of
the operator $H^h$, using linear changes of variables and gauge
transformations.

Without loss of generality we will assume that $X_0=(0,0,0)$ and the
magnetic field $\vec{B}$ at $X_0$ is $(0,0,b_0)\,$. Thus, we can
write the Taylor expansion up to order $4$ at $(0,0,0)$ of the
magnetic field:
\begin{equation}
\left\{
\begin{aligned}\label{e:formB}
B_1(X)& =\ell_1\cdot X+Q_1(X) +C_1(X)+R_1(X)+\mathcal O(|X|^5)\,,\\
B_2(X)& =\ell_2\cdot X +Q_2(X)+C_2(X)+R_2(X)+\mathcal O(|X|^5)\,,\\
B_3(X)& =b_0+Q_3(X)+C_3(X)+R_3(X)+\mathcal O(|X|^5)\,,
\end{aligned}
\right.
\end{equation}
where $\ell_j\in {\mathbb R}^3, j=1,2,$ and $Q_j$, $C_j$ and $R_j$,
$j=1,2,3,$ are the terms of order 2, 3 and 4, respectively.

By assumption \eqref{ass3}, we have
\begin{equation}\label{ass3a}
(\ell_1\cdot X)^2+(\ell_2\cdot X)^2+2b_0Q_3(X)
>0\,,\quad \forall X\in {\mathbb R}^3\,.
\end{equation}
We also have
\[
{\rm div}\,\vec B = \frac{\partial B_1}{\partial x}+\frac{\partial
B_2}{\partial y}+\frac{\partial B_3}{\partial z}=0\,,
\]
which implies
\begin{equation}\label{eq:1.8}\left\{
\begin{aligned}
\frac{\partial (\ell_1\cdot X)}{\partial x}+\frac{\partial
(\ell_2\cdot X)}{\partial y}& =0\,;\\
\frac{\partial Q_1}{\partial x}+\frac{\partial Q_2}{\partial
y}+\frac{\partial
Q_3}{\partial z}& =0\,;\\
\frac{\partial C_1}{\partial x}+\frac{\partial C_2}{\partial
y}+\frac{\partial C_3}{\partial z}& =0\,;\\
\frac{\partial R_1}{\partial x}+\frac{\partial R_2}{\partial
y}+\frac{\partial R_3}{\partial z}& =0 \,.
\end{aligned}
\right.
\end{equation}
With the notation,
\[
\ell_1=(\alpha_1,\alpha_2,\alpha_3)\,, \quad
\ell_2=(\beta_1,\beta_2,\beta_3)\,,
\]
the first line of \eqref{eq:1.8} gives
\begin{equation}
\alpha_1+\beta_2=0\,.
\end{equation}
We can assume without lost of generality that
\begin{equation}
\alpha_3=0\,.
\end{equation}
Indeed, either it is already the case, either $\beta_3=0$ and we
can exchange the role of $x$ and $y$, or $\alpha_3\neq 0$ and
$\beta_3\neq 0$ and a rotation around the $z$-axis permits to obtain
the cancellation of $\alpha_3$.

With the notation
\begin{equation}
Q_1(X)=\sum_{i,j=1}^3a_{ij}X_iX_j\,, \;
Q_2(X)=\sum_{i,j=1}^3b_{ij}X_iX_j\,, \;
Q_3(X)=\sum_{i,j=1}^3c_{ij}X_iX_j\,,
\end{equation}
the second line of \eqref{eq:1.8} reads
\begin{equation}
a_{1j}+b_{2j}+c_{3j}=0\,, \quad j=1,2,3\,.
\end{equation}
With this notation, \eqref{ass3} is equivalent to saying that the
matrix
\begin{equation}\label{eq:Q}
Q=\frac12 {\rm Hess} (|\vec B|^2)(0)=b_0\,{\rm Hess} |\vec B|(0)
\end{equation}
or equivalently (cf. \eqref{ass3a})
\[
Q=
\begin{pmatrix}
\alpha_1^2+\beta_1^2+2b_0c_{11} & \alpha_1\alpha_2+\beta_1\beta_2+2b_0c_{12} & \beta_1\beta_3+2b_0c_{13}\\
\alpha_1\alpha_2+\beta_1\beta_2+2b_0c_{21} & \alpha_2^2+\beta_2^2+2b_0c_{22} & \beta_2\beta_3+2b_0c_{23}\\
\beta_1\beta_3+2b_0c_{31} & \beta_2\beta_3+2b_0c_{32} & \beta_3^2+2b_0c_{33}
\end{pmatrix}
\]
is positive definite. We will denote the elements of $Q$ by
$\{q_{ij}\}_{i,j=1,2,3}$.

Without loss of generality, we can, after possibly a gauge transformation, assume that:
\begin{equation*}
A_1=0\,.
\end{equation*}
With this assumption, we get an equation for $A_2$ by writing:
\[
B_{3}=\frac{\partial A_2}{\partial x}=b_0+
Q_3(X)+C_3(X)+R_3(X)+\mathcal O(|X|^5)\,,
\]
and without loss of generality, we can, after possibly a gauge
transformation assume that
\begin{equation*}
A_2(0,y,z) =0\,.
\end{equation*}
This implies
\begin{multline}\label{eq:1.14}
A_2(x,y,z) \\ =b_0x+\int_0^x  Q_3(\hat X)\,d\hat x +\int_0^x
C_3(\hat X)\,d\hat x+\int_0^x R_3(\hat X)\,d\hat x +x\, \mathcal
O(|X|^5)\,,
\end{multline}
where $\hat X =( \hat x, y, z)$\,. Similarly, we get an equation for $A_3$ by writing:
\[
B_{2}(x,y,z) =-\frac{\partial A_3}{\partial x}=\ell_2\cdot X +
Q_2(X)+C_2(X)+R_2(X)+\mathcal O(|X|^5)
\]
and without loss of generality we can, after possibly a gauge
transformation (keeping the previous properties)  assume that
$$
A_3(0,0,z) =0\,.
$$
This implies
\begin{equation}\label{eq:1.16}
\begin{split}
 A_3(x,y,z) = & -\int_0^x\ell_2\cdot \hat X \,d\hat x -\int _0^x
Q_2(\hat X)\, d\hat x-\int _0^x C_2(\hat X)\,d\hat x \\ & -\int _0^x
R_2(\hat X)\,d\hat x +a_3(y,z)+x\, \mathcal O(|X|^5)\,,
\end{split}
\end{equation}
where $\hat X =( \hat x, y, z)$ and $a_3(y,z)=A_3(0,y,z)$ satisfies
\begin{equation}\label{eq:1.17}
a_3(0,z)=0\,.
\end{equation}
From \eqref{eq:1.14} and \eqref{eq:1.16}, we obtain, using the
definition of $B_1$,
\begin{align*}
B_{1}(x,y,z) =& \frac{\partial A_3}{\partial y}(X) -\frac{\partial A_2}{\partial
z}(X) \\
=& -\beta_2x -\int_0^x  \frac{\partial Q_2}{\partial y}
(\hat X) d\hat x-\int_0^x  \frac{\partial C_2}{\partial y} (\hat X)\,d\hat x
-\int_0^x  \frac{\partial R_2}{\partial y} (\hat X)\,d\hat x\\
& +\frac{\partial a_3}{\partial y}(y,z) -\int _0^x \frac{\partial
Q_3}{\partial z} (\hat X)\,d\hat x-\int_0^x \frac{\partial
C_3}{\partial z} (\hat X)\,d\hat x\\ & -\int_0^x \frac{\partial
R_3}{\partial z} (\hat X)\,d\hat x +x\, \mathcal O(|X|^4)\,,
\end{align*}
and using \eqref{eq:1.8}, we obtain
\begin{align*}
B_1(x,y,z) = & \alpha_1x +Q_1(X)-
Q_1(0,y,z) +C_1(X)- C_1(0,y,z)\\
& +R_1(X)- R_1(0,y,z)+\frac{\partial a_3}{\partial y}(y,z) +\mathcal
O(|X|^5)\,.
\end{align*}
On the other hand, comparing with \eqref{e:formB}, we obtain
\[
\frac{\partial a_3}{\partial y}(y,z) =\alpha_2y+Q_1(0,y,z)+ C_1(0,y,z)+ R_1(0,y,z)+\mathcal
O(|y|^5+|z|^5)\,.
\]
Thus, we obtain, having \eqref{eq:1.17} in mind,
\begin{multline}\label{eq:1.20}
 a_3(y,z)=\frac12\alpha_2y^2 + \int_0^y
Q_1(0,\hat y,z)\, d\hat y\\ + \int_0^y C_1(0,\hat y,z)\, d\hat y+
\int_0^y R_1(0,\hat y,z)\, d\hat y+y\, \mathcal O(|y|^5+|z|^5)\,.
 \end{multline}

Coming back to \eqref{eq:1.14}, we have
\begin{align*}
A_2(x,y,z) =& b_0x+
\frac13c_{11}x^3+c_{12}x^2y+c_{13}x^2z+c_{22}xy^2+2c_{23}xyz+c_{33}xz^2\\
& +\int_0^x C_3(\hat X)\,d\hat x +\int_0^x R_3(\hat X)\,d\hat x
+x\,\mathcal O(|X|^5)\,.
\end{align*}
Similarly, coming back to \eqref{eq:1.16} and using \eqref{eq:1.20}, we obtain
\begin{align*}
    A_3(x,y,z)
=& -\frac12\beta_1x^2-\beta_2xy-\beta_3xz +\frac12\alpha_2y^2\\
& +\int_0^y C_1(0,\hat y,z) d\hat y +\int_0^y R_1(0,\hat y,z) d\hat
y
\\ &
-\frac13b_{11}x^3-b_{12}x^2y-b_{13}x^2z-b_{22}xy^2-2b_{23}xyz-b_{33}xz^2\\
& + \frac13a_{22}y^3+a_{23}y^2z+a_{33}yz^2  -\int_0^x C_2(\hat
X)\,d\hat x-\int_0^x  R_2(\hat X)\,d\hat x\\ & +x\,\mathcal
O(|X|^5)+y\, \mathcal O(|y|^5+|z|^5)\,.
\end{align*}
Finally, what we have got is the following normal form:
\begin{prop}
Using only linear change of variables and gauge transformations, we can assume that
\[
A_1(x,y,z)=0\,, \quad A_2(0,y,z) =0\,, \quad A_3(0,0,z) =0\,.
\]
Moreover:
\begin{equation}\label{e:normal_form}
\begin{aligned}
A_2(x,y,z) = & b_0x+
\frac13c_{11}x^3+c_{12}x^2y+c_{13}x^2z+c_{22}xy^2+2c_{23}xyz+c_{33}xz^2\\
& +\int_0^x C_3(\hat X)\,d\hat x +\int_0^x R_3(\hat X)\,d\hat x
+x\, \mathcal O(|X|^5)\,,\\
A_3(x,y,z) = & -\frac12\beta_1x^2-\beta_2xy-\beta_3xz\\
& +\frac12\alpha_2y^2+\int_0^y C_1(0,\hat y,z) d\hat y +\int_0^y
R_1(0,\hat y,z) d\hat y \\ &
-\frac13b_{11}x^3-b_{12}x^2y-b_{13}x^2z-b_{22}xy^2-2b_{23}xyz-b_{33}xz^2\\
& + \frac13a_{22}y^3+a_{23}y^2z+a_{33}yz^2-\int_0^x C_2(\hat
X)\,d\hat x -\int_0^x  R_2(\hat X)\,d\hat x \\ & +x\,\mathcal
O(|X|^5)+y\,\mathcal O(|y|^5+|z|^5)\,.
\end{aligned}
\end{equation}
\end{prop}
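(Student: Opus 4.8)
The plan is to obtain the normal form by a finite sequence of explicit reductions, each being either a linear change of coordinates or a gauge transformation $\vec A\mapsto \vec A-\nabla\phi$, and then to read off the stated formulas from the fourth-order Taylor expansion of $\vec B$.

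First I would translate so that the minimum sits at the origin, $X_0=0$, and rotate $\mathbb R^3$ so that $\vec B(0)$ points along the third axis, i.e. $\vec B(0)=(0,0,b_0)$; this yields the expansion \eqref{e:formB}. A further rotation about the $z$-axis is still available, and I would use it to arrange $\alpha_3=0$ in $\ell_1=(\alpha_1,\alpha_2,\alpha_3)$: if $\alpha_3=0$ there is nothing to do, if $\beta_3=0$ one swaps the roles of $x$ and $y$, and if $\alpha_3\beta_3\neq 0$ a suitable $z$-rotation kills $\alpha_3$. At this stage the identities forced by $\operatorname{div}\vec B=0$, collected in \eqref{eq:1.8}, give $\alpha_1+\beta_2=0$ together with the coefficientwise relations $a_{1j}+b_{2j}+c_{3j}=0$, which will be used repeatedly to simplify the integrals below.

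Next I would fix the gauge. Choosing $\phi$ with $\partial_x\phi=A_1$ kills $A_1$; the residual freedom is then exactly $\phi=\phi(y,z)$. Integrating the defining relations of $\vec B$ in $x$: from $B_3=\partial_x A_2$ one recovers $A_2$ up to an additive function of $(y,z)$, which I normalize to zero by imposing $A_2(0,y,z)=0$, producing \eqref{eq:1.14}; from $B_2=-\partial_x A_3$ one recovers $A_3$ up to $a_3(y,z)=A_3(0,y,z)$, producing \eqref{eq:1.16}. The remaining gauge freedom (functions of $(y,z)$ that vanish nowhere in particular along $\{y=0\}$) lets me additionally impose $a_3(0,z)=0$, i.e. \eqref{eq:1.17}, and one checks these three normalizations are mutually compatible. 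It then remains to pin down $a_3(y,z)$: substituting the expressions for $A_2$ and $A_3$ into $B_1=\partial_y A_3-\partial_z A_2$ and simplifying with the divergence relations \eqref{eq:1.8} makes the $x$-dependent integrals telescope, leaving $\partial_y a_3(y,z)=\alpha_2 y+Q_1(0,y,z)+C_1(0,y,z)+R_1(0,y,z)+\mathcal O(|y|^5+|z|^5)$; integrating in $y$ using \eqref{eq:1.17} gives \eqref{eq:1.20}.

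Finally, I would substitute the Taylor data $Q_j,C_j,R_j$ back into \eqref{eq:1.14}, \eqref{eq:1.16}, \eqref{eq:1.20}, carry out explicitly the elementary $x$- and $y$-integrals of the quadratic parts (the $c_{ij}$ terms in $A_2$, and the $\beta_i$, $\alpha_2$, $b_{ij}$, $a_{ij}$ terms in $A_3$), and keep the cubic and quartic contributions as the indicated integrals plus remainders of the stated order, arriving at \eqref{e:normal_form}. I expect no analytic difficulty here; the only real work is bookkeeping—tracking which gauge degree of freedom is consumed at each step so that $A_1=0$, $A_2(0,\cdot,\cdot)=0$ and $A_3(0,0,\cdot)=0$ are consistent, and invoking \eqref{eq:1.8} at exactly the right order to see the cancellations that make the $x$-integrals in the formula for $B_1$ disappear. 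The main risk is simply sign or index errors in the quadratic coefficients.
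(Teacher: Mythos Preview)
Your proposal is correct and follows essentially the same route as the paper: translate and rotate to put the minimum at the origin with $\vec B(0)=(0,0,b_0)$ and $\alpha_3=0$, then successively spend the gauge freedom to impose $A_1=0$, $A_2(0,y,z)=0$, $A_3(0,0,z)=0$, recover $A_2,A_3$ by integrating $B_3$ and $B_2$ in $x$, determine $a_3(y,z)$ from the $B_1$ equation using the divergence relations \eqref{eq:1.8}, and finally expand the quadratic pieces explicitly. The only remark is that your parenthetical description of the residual gauge freedom is garbled; what you mean (and what the paper uses) is that after $A_1=0$ the freedom is $\phi(y,z)$, after normalizing $A_2(0,y,z)=0$ it reduces to $\phi(z)$, and this last freedom is what fixes $A_3(0,0,z)=0$.
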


\section{Preliminaries for the proof: Metaplectic transformations} \label{s:prelim2}
In this section, we make further simplifications of the operator
$H^h$, using metaplectic transformations. First, we introduce a
suitable $h$-dependent rescaling of the problem and expand the
resulting operator in fractional powers of $h$.

\subsection{Rescaling}~\\ Let us make a change of variables
\[
x=h^{\frac12}\tilde{x},\quad y=h^{\frac12}\tilde{y},\quad
z=h^{\frac14}\tilde{z}.
\]

We will only apply our operator $H^h$ on functions which are a
product of cut-off functions localized in a neighborhood of
$(0,0,0)\in {\mathbb R}^3$ with linear combinations of terms like
\[
h^\nu w(h^{-1/2}x,h^{-1/2}y,h^{-1/4}z)\,,
\]
with $w$ in $\mathcal S ({\mathbb R}^3)$. These functions are
consequently $\mathcal O(h^\infty)$ outside a fixed neighborhood of
$(0,0,0)$. We will start by doing the computations formally in the
sense that everything is determined modulo $\mathcal O(h^\infty)$,
and any smooth function will be replaced by its Taylor's expansion
at $(0,0,0)\,$. We introduce
\begin{gather*}
C_1(X)=\sum_{i\leq j\leq k}p_{ijk}X_iX_jX_k\, ,\\
C_2(X)=\sum_{i\leq j\leq k}q_{ijk}X_iX_jX_k\, , \\
C_3(X)=\sum_{i\leq j\leq k}r_{ijk}X_iX_jX_k\, .
\end{gather*}
We will also need the coefficient of $z^4$ in $R_j$:
\[
R_j(X)=\delta_jz^4+\ldots, \quad j=1,2,3\,.
\]
We have
\[
H^h=hP^{h}\,,
\]
where
\begin{multline*}
P^{h}= D_{\tilde {x}}^2 +\left(D_{\tilde{y}}
-h^{-1/2}A_2(h^{\frac12}\tilde{x},h^{\frac12}\tilde{y},h^{\frac14}\tilde{z})\right)^2\\
 +\left(h^{1/4}D_{\tilde{z}}-h^{-1/2}
A_3(h^{\frac12}\tilde{x},h^{\frac12}\tilde{y},h^{\frac14}\tilde{z})\right)^2.
\end{multline*}

Now, using \eqref{e:normal_form}, we expand the operator in
fractional powers of $h$. First, we compute
\begin{multline*}
h^{-1/2}A_2(h^{\frac12}\tilde{x},h^{\frac12}\tilde{y},h^{\frac14}\tilde{z})\\
\begin{aligned}
\quad \quad\quad  =& \, b_0\tilde{x}+ h^{\frac12}
c_{33}\tilde{x}\tilde{z}^2+h^{\frac34}[c_{13}\tilde{x}^2\tilde{z}+2c_{23}\tilde{x}\tilde{y}\tilde{z}
+r_{333}\tilde{x}\tilde{z}^3]\\ &
+h\left[\frac13c_{11}\tilde{x}^3+c_{12}\tilde{x}^2\tilde{y}
 +c_{22}\tilde{x}\tilde{y}^2+\frac12r_{133}\tilde{x}^2\tilde{z}^2
 +r_{233}\tilde{x}\tilde{y}\tilde{z}^2+\delta_3\tilde{x}\tilde{z}^4\right]\\
& +\mathcal O(h^{5/4})\,,
\end{aligned}
\end{multline*}
and
\begin{multline*}
h^{-1/2}A_3(h^{\frac12}\tilde{x},h^{\frac12}\tilde{y},h^{\frac14}\tilde{z})\\
\begin{aligned}
=& -h^{\frac14}\beta_3\tilde{x}\tilde{z}+h^{\frac12}\left[
-\frac12\beta_1\tilde{x}^2-\beta_2\tilde{x}\tilde{y}
+\frac12\alpha_2\tilde{y}^2-b_{33}\tilde{x}\tilde{z}^2+a_{33}\tilde{y}\tilde{z}^2\right]\\
&
+h^{\frac34}\left[-b_{13}\tilde{x}^2\tilde{z}-2b_{23}\tilde{x}\tilde{y}\tilde{z}+a_{23}\tilde{y}^2\tilde{z}
-q_{333}\tilde{x}\tilde{z}^3+p_{333}\tilde{y}\tilde{z}^3\right]
+\mathcal O(h)\,.
\end{aligned}
\end{multline*}
Using these formulae and omitting the tilda's, we obtain (in the
dilated coordinates)
\[
P^h=P_0+h^{\frac14}P_1+h^{\frac12}P_2+h^{\frac34}P_3+hP_4+\mathcal
O(h^{5/4})\,,
\]
where
\begin{align*}
P_0=& D_{x}^2+(D_{{y}}-b_0{x})^2\,, \\
P_1=& 0\,,\\
P_2=& -2c_{33}{x}{z}^2(D_{{y}}-b_0{x})+(D_{{z}}+\beta_3{x}{z})^2\,,\\
P_3=& -2c_{13}{x}^2{z}(D_{{y
}}-b_0{x})-2c_{23}[(D_{{y}}-b_0{x}){x}{y}{z}+xyz(D_{{y}}-b_0{x})]
\\ & -2r_{333}{x}{z}^3(D_{{y}}-b_0{x})+(\beta_1{x}^2+2\beta_2{x}{y}
-\alpha_2{y}^2)(D_{{z}}+\beta_3{x}{z})\\
& +(b_{33}{x}-a_{33}y)((D_{{z}}+\beta_3{x}{z}){z}^2+z^2(D_{{z}}+\beta_3{x}{z}))\, ,\\
P_4=& -(D_{{y}}-b_0{x})\left[\frac13c_{11}{x}^3+c_{12}{x}^2{y}
+c_{22}{x}{y}^2+\frac12r_{133}{x}^2{z}^2
+r_{233}{x}{y}{z}^2+\delta_3{x}{z}^4\right]\\ & -
\left[\frac13c_{11}{x}^3+c_{12}{x}^2{y}
+c_{22}{x}{y}^2+\frac12r_{133}{x}^2{z}^2
+r_{233}{x}{y}{z}^2+\delta_3{x}{z}^4\right](D_{{y}}-b_0{x})\\
&+c_{33}^2{x}^2{z}^4
 +
(D_{{z}}+\beta_3{x}{z})\left[b_{13}{x}^2{z}+2b_{23}{x}{y}{z}-a_{23}y^2z
+q_{333}{x}{z}^3-p_{333}yz^3\right]\\ &
+\left[b_{13}{x}^2{z}+2b_{23}{x}{y}{z}-a_{23}y^2z
+q_{333}{x}{z}^3-p_{333}yz^3\right](D_{{z}}+\beta_3{x}{z})\\
& + \left( \frac12\beta_1{x}^2+\beta_2{x}{y}
-\frac12\alpha_2{y}^2+b_{33}{x}{z}^2-a_{33}{y}{z}^2\right)^2.
\end{align*}
\subsection{Partial Fourier transform and gauge transform}~\\
Now we are going to apply some metaplectic transformations. We
recall that these transformations are unitary and therefore preserve
the spectrum of the operators.

First, we make a partial Fourier transform in $y$:
\begin{align*}
\widehat P_0=& D_{x}^2+(\eta-b_0{x})^2\,, \\
\widehat P_1=& 0\,,\\
\widehat P_2=& -2c_{33}{x}{z}^2(\eta-b_0{x})+(D_{{z}}+\beta_3{x}{z})^2\,,\\
\widehat P_3=&
-2c_{13}{x}^2{z}(\eta-b_0{x})-2c_{23}[(\eta-b_0{x}){x}(-D_\eta){z}+x(-D_\eta)z(\eta-b_0{x})]
\\ & -2r_{333}{x}{z}^3(\eta-b_0{x})+(\beta_1{x}^2+2\beta_2{x}(-D_\eta)
-\alpha_2D_\eta^2)(D_{{z}}+\beta_3{x}{z})\\
& +(b_{33}{x}+a_{33}D_\eta)((D_{{z}}+\beta_3{x}{z}){z}^2+z^2(D_{{z}}+\beta_3{x}{z}))\, ,\\
\widehat P_4=&
-(\eta-b_0{x})\Big[\frac13c_{11}{x}^3+c_{12}{x}^2(-D_\eta)
+c_{22}{x}D_\eta^2+\frac12r_{133}{x}^2{z}^2\\
& +r_{233}{x}(-D_\eta){z}^2+\delta_3{x}{z}^4\Big] -
\Big[\frac13c_{11}{x}^3+c_{12}{x}^2(-D_\eta)
+c_{22}{x}D_\eta^2+\frac12r_{133}{x}^2{z}^2\\
& +r_{233}{x}(-D_\eta){z}^2+\delta_3{x}{z}^4\Big](\eta-b_0{x})+c_{33}^2{x}^2{z}^4\\
& +
(D_{{z}}+\beta_3{x}{z})\left[b_{13}{x}^2{z}+2b_{23}{x}(-D_\eta){z}-a_{23}D^2_\eta
z +q_{333}{x}{z}^3-p_{333}(-D_\eta)z^3\right]\\ &
+\left[b_{13}{x}^2{z}+2b_{23}{x}(-D_\eta){z}-a_{23}D^2_\eta z
+q_{333}{x}{z}^3-p_{333}(-D_\eta)z^3\right](D_{{z}}+\beta_3{x}{z})\\
& + \left( \frac12\beta_1{x}^2+\beta_2{x}(-D_\eta)
-\frac12\alpha_2D_\eta^2+b_{33}{x}{z}^2+a_{33}D_\eta z^2\right)^2\,.
\end{align*}
Next, we make a translation $\check{x}=x-\frac{\eta}{b_0}$ (and forget the "check"):
\begin{align*}
\check P_0=& D_{x}^2+b^2_0{x}^2\,, \\
\check P_1=& 0\,,\\
\check P_2=& 2c_{33}(x+\frac{\eta}{b_0}){z}^2b_0x+(D_{{z}}+\beta_3(x+\frac{\eta}{b_0}){z})^2\,,\\
\check P_3=&
2c_{13}(x+\frac{\eta}{b_0})^2{z}(b_0{x})+2c_{23}\Big[(b_0{x})(x+\frac{\eta}{b_0})
\Big(\frac{1}{b_0}D_x-D_\eta\Big){z}\\
&
+(x+\frac{\eta}{b_0})\Big(\frac{1}{b_0}D_x-D_\eta\Big)z(b_0{x})\Big]
+2r_{333}(x+\frac{\eta}{b_0}){z}^3(b_0{x})\\
&  +\Big(\beta_1(x+\frac{\eta}{b_0})^2
+2\beta_2(x+\frac{\eta}{b_0})\left(\frac{1}{b_0}D_x-D_\eta\right)
-\alpha_2\left(\frac{1}{b_0}D_x-D_\eta\right)^2\Big)\times \\
& \times (D_{{z}}+\beta_3(x+\frac{\eta}{b_0}){z}) +
\Big(b_{33}(x+\frac{\eta}{b_0}) -
a_{33}\Big(\frac{1}{b_0}D_x-D_\eta\Big)\Big)\times
\\ & \times \Big((D_{{z}}+\beta_3(x+\frac{\eta}{b_0}){z}){z}^2
+z^2(D_{{z}}+\beta_3(x+\frac{\eta}{b_0}){z})\Big)\,,\\
\check P_4=& b_0{x}\Big[\frac13c_{11}(x+\frac{\eta}{b_0})^3
+c_{12}(x+\frac{\eta}{b_0})^2\left(\frac{1}{b_0}D_x-D_\eta\right)\\
& +c_{22}(x+\frac{\eta}{b_0})\left(\frac{1}{b_0}D_x-D_\eta\right)^2
 +\frac12r_{133}(x+\frac{\eta}{b_0})^2{z}^2\\ &
+r_{233}(x+\frac{\eta}{b_0})\left(\frac{1}{b_0}D_x-D_\eta\right){z}^2
+\delta_3(x+\frac{\eta}{b_0}){z}^4\Big]\\
& + \Big[\frac13c_{11}(x+\frac{\eta}{b_0})^3
+c_{12}(x+\frac{\eta}{b_0})^2\left(\frac{1}{b_0}D_x-D_\eta\right)\\
& +c_{22}(x+\frac{\eta}{b_0})\left(\frac{1}{b_0}D_x-D_\eta\right)^2
 +\frac12r_{133}(x+\frac{\eta}{b_0})^2{z}^2\\
&
+r_{233}(x+\frac{\eta}{b_0})\left(\frac{1}{b_0}D_x-D_\eta\right){z}^2
+\delta_3(x+\frac{\eta}{b_0}){z}^4\Big] (b_0{x})\\ &
+c_{33}^2(x+\frac{\eta}{b_0})^2{z}^4 +
(D_{{z}}+\beta_3(x+\frac{\eta}{b_0}){z})\Big[b_{13}(x+\frac{\eta}{b_0})^2{z}
\\ & +2b_{23}(x+\frac{\eta}{b_0})\left(\frac{1}{b_0}D_x-D_\eta\right){z}
 -a_{23}\left(\frac{1}{b_0}D_x-D_\eta\right)^2 z\\ & +q_{333}(x+\frac{\eta}{b_0}){z}^3-p_{333}\left(\frac{1}{b_0}D_x-D_\eta\right)z^3\Big]\\
&
+\Big[b_{13}(x+\frac{\eta}{b_0})^2{z}+2b_{23}(x+\frac{\eta}{b_0})\left(\frac{1}{b_0}D_x-D_\eta\right){z}
-a_{23}\left(\frac{1}{b_0}D_x-D_\eta\right) ^2 z \\
&
+q_{333}(x+\frac{\eta}{b_0}){z}^3-p_{333}\left(\frac{1}{b_0}D_x-D_\eta\right)z^3\Big](D_{{z}}+\beta_3(x+\frac{\eta}{b_0}){z})\\
& + \Big(
\frac12\beta_1(x+\frac{\eta}{b_0})^2+\beta_2(x+\frac{\eta}{b_0})\left(\frac{1}{b_0}D_x-D_\eta\right)
-\frac12\alpha_2\left(\frac{1}{b_0}D_x-D_\eta\right)^2\\
& +b_{33}(x+\frac{\eta}{b_0}){z}^2
-a_{33}\left(\frac{1}{b_0}D_x-D_\eta\right) z^2\Big)^2.
\end{align*}
Finally, we make a gauge transformation (by
$\exp(-i\frac{\beta_3}{2b_0}\eta z^2)$):
\begin{align*}
\widetilde P_0= & D_{x}^2+b^2_0{x}^2\,,\\
\widetilde P_1= & 0\,,\\
\widetilde P_2= &
2b_0c_{33}x(x+\frac{\eta}{b_0}){z}^2+(D_{{z}}+\beta_3x{z})^2\,,\\
\widetilde P_3 = &
2b_0c_{13}x(x+\frac{\eta}{b_0})^2{z}+2c_{23}\Big[(b_0{x})(x+\frac{\eta}{b_0})
\left(\frac{1}{b_0}D_x+\frac{\beta_3}{2b_0} z^2-D_\eta\right){z}\\
& +(x+\frac{\eta}{b_0})\left(\frac{1}{b_0}D_x+\frac{\beta_3}{2b_0}
z^2-D_\eta\right)z(b_0{x})\Big]
+2r_{333}(x+\frac{\eta}{b_0}){z}^3(b_0{x})\\ &
+\Big(\beta_1(x+\frac{\eta}{b_0})^2
+2\beta_2(x+\frac{\eta}{b_0})\left(\frac{1}{b_0}D_x+\frac{\beta_3}{2b_0}
z^2-D_\eta\right)\\
& -\alpha_2\left(\frac{1}{b_0}D_x+\frac{\beta_3}{2b_0}
z^2-D_\eta\right)^2\Big)(D_{{z}}+\beta_3x{z})\\
& + \left(b_{33}(x+\frac{\eta}{b_0}) -
a_{33}\left(\frac{1}{b_0}D_x+\frac{\beta_3}{2b_0}
z^2-D_\eta\right)\right)\times \\ & \times
((D_{{z}}+\beta_3x{z}){z}^2
+z^2(D_{{z}}+\beta_3x{z})),\\
 \widetilde P_4 = & b_0{x}\Big[\frac13c_{11}(x+\frac{\eta}{b_0})^3
+c_{12}(x+\frac{\eta}{b_0})^2\left(\frac{1}{b_0}D_x+\frac{\beta_3}{2b_0}
z^2-D_\eta\right)\\
&
+c_{22}(x+\frac{\eta}{b_0})\left(\frac{1}{b_0}D_x+\frac{\beta_3}{2b_0}
z^2-D_\eta\right)^2 +\frac12r_{133}(x+\frac{\eta}{b_0})^2{z}^2\\
&
+r_{233}(x+\frac{\eta}{b_0})\left(\frac{1}{b_0}D_x+\frac{\beta_3}{2b_0}
z^2-D_\eta\right){z}^2
+\delta_3(x+\frac{\eta}{b_0}){z}^4\Big]\\
& + \Big[\frac13c_{11}(x+\frac{\eta}{b_0})^3
+c_{12}(x+\frac{\eta}{b_0})^2\left(\frac{1}{b_0}D_x+\frac{\beta_3}{2b_0}
z^2-D_\eta\right)\\
&
+c_{22}(x+\frac{\eta}{b_0})\left(\frac{1}{b_0}D_x+\frac{\beta_3}{2b_0}
z^2-D_\eta\right)^2 +\frac12r_{133}(x+\frac{\eta}{b_0})^2{z}^2\\
&
+r_{233}(x+\frac{\eta}{b_0})\left(\frac{1}{b_0}D_x+\frac{\beta_3}{2b_0}
z^2-D_\eta\right){z}^2 +\delta_3(x+\frac{\eta}{b_0}){z}^4\Big]
(b_0{x})\\ & +c_{33}^2(x+\frac{\eta}{b_0})^2{z}^4
 + (D_{{z}}+\beta_3x{z})\Big[b_{13}(x+\frac{\eta}{b_0})^2{z}\\ &
+2b_{23}(x+\frac{\eta}{b_0})\left(\frac{1}{b_0}D_x+\frac{\beta_3}{2b_0}
z^2-D_\eta\right){z}
-a_{23}\left(\frac{1}{b_0}D_x+\frac{\beta_3}{2b_0}z^2-D_\eta\right)
^2 z \\ & +q_{333}(x+\frac{\eta}{b_0}){z}^3
-p_{333}\left(\frac{1}{b_0}D_x+\frac{\beta_3}{2b_0}z^2-D_\eta\right)z^3\Big]
+\Big[b_{13}(x+\frac{\eta}{b_0})^2{z}\\
& +2b_{23}(x+\frac{\eta}{b_0})
\left(\frac{1}{b_0}D_x+\frac{\beta_3}{2b_0}z^2-D_\eta\right){z}
-a_{23}\left(\frac{1}{b_0}D_x+\frac{\beta_3}{2b_0}z^2-D_\eta\right) ^2 z\\
&+q_{333}(x+\frac{\eta}{b_0}){z}^3-p_{333}\left(\frac{1}{b_0}D_x
+\frac{\beta_3}{2b_0}z^2-D_\eta\right)z^3\Big](D_{{z}}+\beta_3x{z})\\
& + \Big(
\frac12\beta_1(x+\frac{\eta}{b_0})^2+\beta_2(x+\frac{\eta}{b_0})\left(\frac{1}{b_0}D_x+\frac{\beta_3}{2b_0}
z^2-D_\eta\right)\\ &
-\frac12\alpha_2\left(\frac{1}{b_0}D_x+\frac{\beta_3}{2b_0}
z^2-D_\eta\right)^2 +b_{33}(x+\frac{\eta}{b_0}){z}^2\\
& -a_{33}\left(\frac{1}{b_0}D_x+\frac{\beta_3}{2b_0}
z^2-D_\eta\right) z^2\Big)^2\,.
\end{align*}

\section{Construction of approximate eigenfunctions} \label{s:eigenmodes}
In this section, we complete the proof of Theorem~\ref{t:qmodes}. We
start with a formal construction. We will construct an
approximate eigenfunction of the operator $P^h$ in the form
$$
u^h(x,\eta,z)\sim \sum_{ j\in \mathbb N} h^{\frac j4}
u_j(x,\eta,z)\,,
$$
corresponding to an approximate eigenvalue
$$
\lambda^h \sim \sum_{ j\in \mathbb N} h^{\frac j4} \lambda_j\,.
$$
We will express the cancelation of the coefficients of
$h^{{\ell}/{4}}$ ($\ell \in \mathbb N$) in the formal expansion of
$(P^{h}-\lambda^h)u^h$, starting with $\ell= 0$\,.

\subsection{The first equations}~\\
The first equations read (here we omit tilda's):
\begin{align}
\label{eq1} (P_0-\lambda_0)u_0 = & 0\,,\\
\label{eq2} (P_0-\lambda_0) u_1 = & \lambda_1 u_0\,,\\
\label{eq3} (P_0 -\lambda_0) u_2 = & - P_2 u_0  +\lambda_1 u_1 +\lambda_2 u_0\,,\\
\label{eq4} (P_0 -\lambda_0) u_3 = & - P_3 u_0 - P_2 u_1 +\lambda_1 u_2 +\lambda_2 u_1 +\lambda_3 u_0\,,\\
\label{eq5} (P_0 -\lambda_0) u_4 = &- P_4 u_0 - P_3 u_1 - P_2
u_2+\lambda_1 u_3  +\lambda_2 u_2 +\lambda_3 u_1 + \lambda_4 u_0\,,
\end{align}
and we now show how to solve them successively.

\subsection{Main  equation}~\\
The first equation \eqref{eq1} reads
\[
(D_x^2 +b_0^2 x^2-\lambda_0)u_0 = 0\,.
\]
We arrive at the eigenvalue problem for the harmonic oscillator
\begin{equation}\label{tnf13}
\mathfrak h_1:=D_x^2 +b_0^2x^2\,.
\end{equation}
Recall that its eigenvalues have the form
\[
{\rm Sp}(\mathfrak h_1)=\{\mu_{k}=(2k+1)b_0: k\in \NN\}\,.
\]
An eigenfunction of $\mathfrak h_1$ associated with  the eigenvalue
$\mu_{k}$ is given by
\[
\phi_{k}(x)=b_0^{1/2} H_{k}(b_0^{1/2}x) e^{-b_0x^2/2}\,,
\]
where $H_k$ is the Hermite polynomial:
\[
H_k(t)=(-1)^ke^{t^2}\frac{d^k}{dt^k}(e^{-t^2})\,.
\]
The norm of $\phi_{k}$ in $L^2(\RR,dx)$ equals the norm of $H_k$ in
$L^2(\RR,e^{-t^2}\,dt)$\,, which is given by
\[
\|H_k\|=\sqrt{2^kk!\sqrt{\pi}}\,.
\]
Some well-known formulae, concerning to the Hermite functions, which
we need in the paper, are gathered in the appendix.

We will take $\lambda_0$ as the $k$-th eigenvalue of $\mathfrak h_1$
\begin{equation}\label{tnf12}
\lambda_0=(2k+1)b_0\,
\end{equation}
and will look for a solution $u_0$ to \eqref{eq1} in the form
\begin{equation}\label{tnf11}
u_0(x,\eta,z) = \phi_k(x)\chi_0(\eta)\Psi_0(z)\,,
\end{equation}
with $\chi_0$ and $\Psi_0$ in $ \mathcal S (\mathbb R)$ to be
determined in the next steps.

Remark that what we do at this step corresponds to the so-called
reduction to the $k$-th Landau-Level.

\subsection{Second equation : coefficient of $h^{\frac 14}$}~\\
We look for a solution $u_1$ to \eqref{eq2} in the form
\begin{equation}\label{tnf14}
u_1(x,\eta,z) = \phi_k(x)v_1(\eta,z)\,,
\end{equation}
with $v_1\in \mathcal S(\mathbb R^2)$ to be determined in the next
steps, and take
\begin{equation}\label{tnf15}
\lambda_1=0\,.
\end{equation}

\subsection{Third  equation: coefficient of $h^{\frac 12}$}~\\
In order to solve the equation \eqref{eq3}, we find as a necessary
condition that, for any $z$ we should have,
\begin{equation}\label{eq3a}
-\frac{1}{\|H_k\|^2}\langle P_2u_0, \phi_k\rangle_x+ \lambda_2
\chi_0(\eta)\Psi_0(z) = 0\,,
\end{equation}
with $\langle\cdot,\cdot\rangle_x$ denoting the scalar product in
$L^2(\mathbb R_x)$. Using that $\phi_k^2$ is even, we obtain that
\begin{equation}
\begin{split}\label{eq3b}
 \frac{1}{\|H_k\|^2}\langle P_2u_0, \phi_k\rangle_x = & \frac{1}{\|H_k\|^2} \langle [
2b_0c_{33}x^2{z}^2+D^2_{{z}}+\beta_3x^2{z}^2] u_0, \phi_k\rangle_x\\
= & \chi_0(\eta)\mathfrak h_3\Psi_0(z) \,,
\end{split}
\end{equation}
where
\begin{equation}\label{tnf17}
\mathfrak h_3:=  D_z^2 + \frac{2k+1}{2b_0}(\beta_3^2
+2c_{33}b_0)z^2\,.
\end{equation}
 Thus, the condition \eqref{eq3a} can be rewritten as
\begin{equation}\label{tnf16}
\mathfrak h_3\Psi_0 =\lambda_2 \Psi_0\,.
\end{equation}
This determines $\lambda_2$ and $\Psi_0$. We take $\lambda_2$ as the
$j$-th eigenvalue eigenvalue of $\mathfrak h_3$:
\begin{equation}\label{tnf18}
\lambda_2 =(2j+1)(2k+1)^{1/2}\sqrt{\frac{\beta_3^2
+2c_{33}b_0}{2b_0}}=(2j+1)\Lambda_2\;,
\end{equation}
where
\[
\Lambda_2:=(2k+1)^{1/2}\sqrt{\frac{\beta_3^2
+2c_{33}b_0}{2b_0}}=(2k+1)^{1/2}\sqrt{\frac{q_{33}}{2b_0}}\;,
\]
and $\Psi_0$ as the corresponding eigenfunction:
\[
\Psi_0(z)=\psi_{j}(z):=\Lambda_2^{1/2} H_{j}(\Lambda_2^{1/2}z)
e^{-\Lambda_2z^2/2}\,.
\]
Observe that
\[
q_{33}=\frac12\frac{\partial^2|B|^2}{\partial z^2}(X_0)=2b_0a\,.
\]
Thus we obtain
\[
\lambda_2=(2j+1)(2k+1)^{1/2}a^{1/2}\,.
\]
To find $u_2$, from \eqref{eq3} we obtain that
\begin{multline*}
(\mathfrak h_1 -\lambda_0) u_2 \\
\begin{aligned}
= &
-\phi_k(x)\chi_0(\eta)D_{z}^2\psi_j(z)-x\phi_k(x)\chi_0(\eta)(2c_{33}\eta
{z}^2+\beta_3(D_zz+zD_z))\psi_j(z)\\ &
-(\beta_3^2+2b_0c_{33})x^2\phi_k(x)\chi_0(\eta)z^2\psi_j(z)+\lambda_2
\phi_k(x)\chi_0(\eta)\psi_j(z)\\
= &
-\frac{1}{2b_0^{1/2}}\left(\phi_{k+1}(x)+2k\phi_{k-1}(x)\right)\chi_0(\eta)(2c_{33}\eta
{z}^2+\beta_3(D_zz+zD_z))\psi_j(z)\\ &
-\frac{1}{4b_0}(\beta_3^2+2b_0c_{33})\left(\phi_{k+2}(x)
+4k(k-1)\phi_{k-2}(x)\right)\chi_0(\eta)z^2\psi_j(z)\,.
\end{aligned}
\end{multline*}
Therefore, using the identity
\[
(\mathfrak h_1-\lambda_0)\phi_{k+\ell}=2\ell b_0\phi_{k+\ell},
\]
we get
\begin{equation}\label{e:u2}
u_2(x,\eta,z)=U_2(x,\eta,z)+\phi_k(x)v_2(\eta,z)\,,
\end{equation}
where
\begin{multline*}
U_2(x,\eta,z)\\
\begin{aligned}
= &
-\frac{1}{4b_0^{3/2}}\left(\phi_{k+1}(x)-2k\phi_{k-1}(x)\right)\chi_0(\eta)(2c_{33}\eta
{z}^2+\beta_3(D_zz+zD_z))\psi_j(z)\\ &
-\frac{1}{16b^2_0}(\beta_3^2+2b_0c_{33})\left(\phi_{k+2}(x)
-4k(k-1)\phi_{k-2}(x)\right)\chi_0(\eta)z^2\psi_j(z)\,,
\end{aligned}
\end{multline*}
and $v_2\in \mathcal S(\mathbb R^2)$ has to be
determined in the next steps.

\subsection{Fourth equation: coefficient of $h^{\frac 34}$}~\\
In order to solve the equation \eqref{eq4}, we find as a necessary
condition that, for any $(z,\eta)$  we should have
\begin{equation}\label{eq:step4}
- \frac{1}{\|H_k\|^2} \langle P_3 u_0, \phi_k\rangle_x -
\frac{1}{\|H_k\|^2} \langle P_2 u_1, \phi_k\rangle_x + \lambda_2
v_1(z,\eta) +\lambda_3 \chi_0(\eta)\psi_j(z)=0.
\end{equation}

For the first term in the left-hand side of \eqref{eq:step4}, since
$\phi^2_k$ is even, we have
\[
\frac{1}{\|H_k\|^2}\langle P_3 u_0,
\phi_k\rangle_x=\frac{1}{\|H_k\|^2}\langle P^+_3 u_0,
\phi_k\rangle_x,
\]
where $P^+_3$ is the even part of $P_3$ as a differential operator
in $x$. As a sum of homogeneous components in $(\eta, D_\eta)$, the
operator $P^+_3$ is written as
\[
P^+_3= P^{(2)}_3+P^{(1)}_3+P^{(0)}_3\,,
\]
where
\begin{align*}
P^{(2)}_3 = & \left(\beta_1\frac{\eta^2}{b^2_0}
-2\beta_2\frac{\eta}{b_0}D_\eta
-\alpha_2D_\eta^2\right)D_{z}, \\
P^{(1)}_3 = & \Big[2q_{13}x^2\frac{\eta}{b_0}
+q_{23}\left(\frac{\eta}{b^2_0}(xD_x+D_xx)-2x^2D_\eta\right)
+\alpha_2\beta_3\frac{1}{b_0}(xD_x+D_xx)D_\eta\Big]{z}\\
& +\Big[\frac{\beta_2\beta_3}{2b^2_0}\eta
+\frac{\alpha_2\beta_3}{2b_0}D_\eta
+b_{33}\frac{\eta}{b_0}+a_{33}D_\eta\Big](z^2D_{z}+D_zz^2),\\
 P^{(0)}_3 =& \frac{q_{23}\beta_3}{b_0}
x^2z^3 +2(\beta_3b_{33}+b_0r_{333})x^2{z}^3 +(\beta_1x^2
+2\beta_2\frac{1}{b_0}xD_x
-\alpha_2\frac{1}{b^2_0}D^2_x)D_{{z}}\\
&
-\Big[\frac{\alpha_2\beta^2_3}{8b^2_0}+\frac{\beta_3a_{33}}{2b_0}\Big]
(D_{{z}}{z}^4
+z^4D_{{z}})-\Big[\frac{\alpha_2\beta^2_3}{2b^2_0}+\frac{\beta_3a_{33}}{b_0}\Big](xD_x+D_xx)
z^3\,.
\end{align*}

Now we compute:
\begin{multline}\label{eq:P3+}
\frac{1}{\|H_k\|^2}\langle P^+_3u_0,\phi_k\rangle_x \\ =
\frac{1}{\|H_k\|^2}\langle P^{(2)}_3u_0,\phi_k\rangle_x+
\frac{1}{\|H_k\|^2}\langle P^{(1)}_3u_0,\phi_k\rangle_x+
\frac{1}{\|H_k\|^2}\langle P^{(0)}_3u_0,\phi_k\rangle_x\,,
\end{multline}
where
\begin{multline*}
\frac{1}{\|H_k\|^2}\langle P^{(2)}_3u_0,\phi_k\rangle_x\\
\begin{aligned}
 = &
\left(\beta_1\frac{\eta^2}{b^2_0} -2\beta_2\frac{\eta}{b_0}D_\eta
-\alpha_2D_\eta^2\right)\chi_0(\eta)D_{z}\psi_j(z)\\
= & \left(\beta_1\frac{\eta^2}{b^2_0}
-2\beta_2\frac{\eta}{b_0}D_\eta
-\alpha_2D_\eta^2\right)\chi_0(\eta)\frac12\Lambda_2^{1/2}i
\left(\psi_{j+1}(z)-2j\psi_{j-1}(z)\right)\,,
\end{aligned}
\end{multline*}

\begin{multline*}
\frac{1}{\|H_k\|^2}\langle P^{(1)}_3u_0,\phi_k\rangle_x\\
\begin{aligned}
 = &
(2k+1)\Big[q_{13}\frac{\eta}{b^2_0}-q_{23}\frac{1}{b_0}D_\eta\Big]\chi_0(\eta)z\psi_j(z)\\
& +\Big[\frac{\beta_2\beta_3}{2b^2_0}\eta
+\frac{\alpha_2\beta_3}{2b_0}D_\eta
+b_{33}\frac{\eta}{b_0}+a_{33}D_\eta\Big]\chi_0(\eta)(z^2D_{z}+D_zz^2)\psi_j(z)\\
= &
(2k+1)\Big[q_{13}\frac{\eta}{b^2_0}-q_{23}\frac{1}{b_0}D_\eta\Big]\chi_0(\eta)
\frac{1}{2\Lambda_2^{1/2}}\left(\psi_{j+1}(z)+2j\psi_{j-1}(z)\right)\\
& +\Big[\frac{\beta_2\beta_3}{2b^2_0}\eta
+\frac{\alpha_2\beta_3}{2b_0}D_\eta
+b_{33}\frac{\eta}{b_0}+a_{33}D_\eta\Big]\chi_0(\eta)\frac{1}{4\Lambda_2^{1/2}}i[\psi_{j+3}(z)+\\
& +(2j+2)\psi_{j+1}(z)
-4j^2\psi_{j-1}(z)-8j(j-1)(j-2)\psi_{j-3}(z)]\,,
\end{aligned}
\end{multline*}

\begin{multline*}
\frac{1}{\|H_k\|^2}\langle P^{(0)}_3u_0,\phi_k\rangle_x\\
\begin{aligned}
 = &
(2k+1)\Big[\frac{q_{23}\beta_3}{2b^2_0}
+\frac{\beta_3b_{33}+b_0r_{333}}{b_0}\Big]\chi_0(\eta){z}^3\psi_j(z)
\\ & +(\beta_1\frac{2k+1}{2b_0}
+\beta_2\frac{1}{b_0}i-\alpha_2\frac{2k+1}{2b_0})\chi_0(\eta)D_{z}\psi_j(z)\\
& - \Big[\frac{\alpha_2\beta^2_3}{8b^2_0}
+\frac{\beta_3a_{33}}{2b_0}\Big]\chi_0(\eta)(z^4D_z+D_zz^4)\psi_j(z)\\
= & (2k+1)\Big[\frac{q_{23}\beta_3}{2b^2_0}
+\frac{\beta_3b_{33}+b_0r_{333}}{b_0}\Big]\chi_0(\eta)
\frac{1}{8\Lambda_2^{3/2}}(\psi_{j+3}(z)+
\\ & +(6j+6)\psi_{j+1}(z)+12j^2\psi_{j-1}(z)+8j(j-1)(j-2)\psi_{j-3}(z))
\\ & +(\beta_1\frac{2k+1}{2b_0}
+\beta_2\frac{1}{b_0}i-\alpha_2\frac{2k+1}{2b_0})\chi_0(\eta)\frac12\Lambda_2^{1/2}i(\psi_{j+1}(z)
-2j\psi_{j-1}(z))\\
& - \Big[\frac{\alpha_2\beta^2_3}{8b^2_0}
+\frac{\beta_3a_{33}}{2b_0}\Big]\chi_0(\eta)\frac{1}{16\Lambda^{3/2}_2}i\Big(
\psi_{j+5}(z)+(6j+12)\psi_{j+3}(z)\\
& +4(2j^2+4j+3)\psi_{j+1}(z) -8(2j^2+1)j\psi_{j-1}(z)\\
& -48j(j-1)^2(j-2)\psi_{j-3}(z)
 -32j(j-1)(j-2)(j-3)(j-4)\psi_{j-5}(z)\Big)\,.
\end{aligned}
\end{multline*}

For the second term in the left-hand side of \eqref{eq:step4}, as in
\eqref{eq3b}, we obtain
\[
\frac{1}{\|H_k\|^2}\langle P_2u_1,\phi_k\rangle_x= \mathfrak
h_3v_1(\eta,z)\,.
\]

Thus, the condition \eqref{eq:step4} can be written as an equation
with respect to $v_1$ in the form
\begin{equation}\label{eq:step4a}
(\mathfrak h_3-\lambda_2)v_1(\eta,z)=-\frac{1}{\|H_k\|^2}\langle
P^+_3u_0,\phi_k\rangle_x +\lambda_3\chi_0(\eta)\psi_j(z)\,.
\end{equation}

In order to solve \eqref{eq:step4a}, as a necessary condition, we
should have
\[
-\frac{1}{\|H_k\|^2\|H_j\|^2}\langle
P^+_3u_0,\phi_k\psi_j\rangle_{x,z} +\lambda_3\chi_0(\eta)=0\,,
\]
with $\langle\cdot,\cdot\rangle_{x,z}$ denoting the scalar product
in $L^2({\mathbb R}^2_{x,z})$. It follows from \eqref{eq:P3+} that
the function $\langle P^+_3u_0,\phi_k\rangle_x(\eta,z)\psi_j(z)$ is
odd in $z$, therefore, $\langle
P^+_3u_0,\phi_k\psi_j\rangle_{x,z}=0$ and
\begin{equation}\label{e:lambda3}
\lambda_3=0\,.
\end{equation}
Using \eqref{eq:P3+} and the identity
\[
(\mathfrak h_3-\lambda_2)\psi_{j+\ell}=2\ell\Lambda_2\psi_{j+\ell},
\]
we find a solution to \eqref{eq:step4a} in the form
\begin{equation}\label{eq:v1}
v_1= v^{(2)}_1+v^{(1)}_1+v^{(0)}_1\,,
\end{equation}
where
\begin{align*}
v^{(2)}_1 = & -\left(\beta_1\frac{\eta^2}{b^2_0}
-2\beta_2\frac{\eta}{b_0}D_\eta
-\alpha_2D_\eta^2\right)\chi_0(\eta)\frac{1}{4\Lambda_2^{1/2}}i
\left(\psi_{j+1}(z)+2j\psi_{j-1}(z)\right)\,,
\\
v^{(1)}_1 = &
-\Big[q_{13}\frac{\eta}{b^2_0}-q_{23}\frac{1}{b_0}D_\eta\Big]\chi_0(\eta)
\frac{2k+1}{4\Lambda_2^{3/2}}\left(\psi_{j+1}(z)-2j\psi_{j-1}(z)\right)\\
& -\Big[\frac{\beta_2\beta_3}{2b^2_0}\eta
+\frac{\alpha_2\beta_3}{2b_0}D_\eta
+b_{33}\frac{\eta}{b_0}+a_{33}D_\eta\Big]\chi_0(\eta)
\frac{1}{24\Lambda_2^{3/2}}i[\psi_{j+3}(z)+\\ & +(6j+6)\psi_{j+1}(z)
+12j^2\psi_{j-1}(z)+8j(j-1)(j-2)\psi_{j-3}(z)]\,,\\
v^{(0)}_1 = & -\Big[\frac{q_{23}\beta_3}{2b^2_0}
+\frac{\beta_3b_{33}+b_0r_{333}}{b_0}\Big]\chi_0(\eta)
\frac{2k+1}{48\Lambda_2^{5/2}}(\psi_{j+3}(z)+
\\ & +(18j+18)\psi_{j+1}(z)-36j^2\psi_{j-1}(z)-8j(j-1)(j-2)\psi_{j-3}(z))
\\ & -(\beta_1\frac{2k+1}{2b_0}
+\beta_2\frac{1}{b_0}i-\alpha_2\frac{2k+1}{2b_0})\chi_0(\eta)
\frac{1}{4\Lambda_2^{1/2}}i(\psi_{j+1}(z)+2j\psi_{j-1}(z))\\
& + \Big[\frac{\alpha_2\beta^2_3}{8b^2_0}
+\frac{\beta_3a_{33}}{2b_0}\Big]\chi_0(\eta)\frac{1}{16\Lambda^{5/2}_2}i
\Big( \frac{1}{10}\psi_{j+5}(z)+(j+2)\psi_{j+3}(z)+\\ &
+2(2j^2+4j+3)\psi_{j+1}(z)+4(2j^2+1)j \psi_{j-1}(z)\\
& +8j(j-1)^2(j-2)\psi_{j-3}(z)
+\frac{16j(j-1)(j-2)(j-3)(j-4)}{5}\psi_{j-5}(z)\Big)\,.
\end{align*}

With such a choice of $v_1$, the orthogonality
condition~\eqref{eq:step4} holds, and a solution $u_3$ to
\eqref{eq4} exists. Substituting \eqref{eq:v1} in \eqref{tnf14}, we
find $u_1$.

\subsection{Fifth equation: coefficient of $h$}~\\
The necessary condition for solvability of \eqref{eq5} reads
\begin{multline}\label{eq:step5}
- \frac{1}{\|H_k\|^2}\langle P_4 u_0, \phi_k\rangle_x -
\frac{1}{\|H_k\|^2}\langle P_3 u_1, \phi_k\rangle_x-
\frac{1}{\|H_k\|^2}\langle P_2 u_2, \phi_k\rangle_x \\ +\lambda_2
\langle u_2, \phi_k\rangle_x +\lambda_4 \chi_0(\eta)\psi_j(z)=0\,.
\end{multline}
Using \eqref{e:u2} and \eqref{eq3b}, we get
\[
\frac{1}{\|H_k\|^2}\langle P_2 u_2,
\phi_k\rangle_x=\frac{1}{\|H_k\|^2}\langle P_2 U_2,
\phi_k\rangle_x+\mathfrak h_3v_2(\eta,z)\,,
\]
and
\[
\frac{1}{\|H_k\|^2}\langle u_2, \phi_k\rangle_x= v_2(\eta,z)\,.
\]
Therefore, the condition \eqref{eq:step5} can be rewritten as an
equation with respect to $v_2$ in the form
\begin{multline}\label{eq:step5a}
(\mathfrak h_3-\lambda_2)v_2(\eta,z)\\ = -
\frac{1}{\|H_k\|^2}\langle P_4 u_0, \phi_k\rangle_x -
\frac{1}{\|H_k\|^2}\langle P_3 u_1, \phi_k\rangle_x-
\frac{1}{\|H_k\|^2} \langle P_2 U_2, \phi_k\rangle_x +\lambda_4
\chi_0(\eta)\psi_j(z)\,.
\end{multline}
The necessary condition for solvability of \eqref{eq:step5a} is
given by
\begin{multline}\label{eq5ort}
- \frac{1}{\|H_k\|^2\|H_j\|^2}\langle P_4 u_0,
\phi_k\psi_j\rangle_{x,z} - \frac{1}{\|H_k\|^2\|H_j\|^2}\langle P_3
u_1, \phi_k\psi_j\rangle_{x,z}\\ -
\frac{1}{\|H_k\|^2\|H_j\|^2}\langle P_2 U_2,
\phi_k\psi_j\rangle_{x,z} +\lambda_4 \chi_0(\eta)=0\,.
\end{multline}

\begin{lem}\label{l:5ort}
The identity \eqref{eq5ort} has the form
\begin{multline}\label{e:chi0}
\frac{2k+1}{2b_0}\frac{q_{22}q_{33}-q^2_{23}}{q_{33}}D^2_\eta\chi_0(\eta)-\frac{2k+1}{2b^2_0}
\frac{q_{12}q_{33}-q_{13}q_{23}}{q_{33}}(\eta
D_\eta+D_\eta\eta)\chi_0(\eta)\\+\frac{2k+1}{2b_0^3}\frac{q_{11}q_{33}-q_{13}^2}{q_{33}}\eta^2\chi_0(\eta)
 +\alpha D_\eta\chi_0(\eta)+\beta \eta\chi_0(\eta)+\gamma
\chi_0(\eta) -\lambda_4\chi_0(\eta) =0\,,
\end{multline}
where $\alpha=\alpha(j,k),\beta=\beta(j,k), \gamma=\gamma(j,k)$ are
of the form
\begin{equation}\label{eq:alpha}
\alpha = \bar\alpha (2k+1)^{1/2}(2j+1),\quad \beta = \bar\beta
(2k+1)^{1/2}(2j+1),
\end{equation}
and
\begin{equation}\label{eq:gamma}
\gamma=\gamma_1(2j+1)^2+\gamma_2(2k+1)^2+\gamma_0,
\end{equation}
where $\bar\alpha$, $\bar\beta$, $\gamma_0$, $\gamma_1$ and
$\gamma_2$ are some explicit constants.
\end{lem}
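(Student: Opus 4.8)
\textit{Proof proposal.} The identity \eqref{eq5ort} is obtained by inserting into it the functions already determined in the previous steps — $u_0=\phi_k(x)\chi_0(\eta)\psi_j(z)$ from \eqref{tnf11}, $u_1=\phi_k(x)v_1(\eta,z)$ with $v_1$ given by \eqref{eq:v1}, and $U_2$ from \eqref{e:u2} — and then carrying out the two partial pairings $\langle\,\cdot\,,\phi_k\rangle_x$ and $\langle\,\cdot\,,\psi_j\rangle_z$. Each of the three brackets $\langle P_4 u_0,\phi_k\psi_j\rangle_{x,z}$, $\langle P_3 u_1,\phi_k\psi_j\rangle_{x,z}$, $\langle P_2 U_2,\phi_k\psi_j\rangle_{x,z}$ is a finite sum of monomials in $x,D_x,z,D_z,\eta,D_\eta$ acting on $\phi_{k+\ell}$, $\psi_{j+\ell'}$ and on $\chi_0$; using the Hermite recursion and creation/annihilation identities recalled in the appendix (the very ones already used to get \eqref{eq:P3+} and \eqref{e:u2}) to rewrite $x^aD_x^b\phi_k$ and $z^aD_z^b\psi_j$ as linear combinations of $\phi_{k'}$ resp. $\psi_{j'}$, only the $\phi_k$- and $\psi_j$-components survive the pairings, and what remains is a differential operator of order at most $4$ in $(\eta,D_\eta)$ applied to $\chi_0$, which must equal $\lambda_4\chi_0$.

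The first thing to settle — and the reason the resulting equation is only of second order — is the cancellation of all contributions of order $3$ and $4$ in $(\eta,D_\eta)$. These top-order terms are produced on one side by the square $\big(\frac12\beta_1(x+\frac\eta{b_0})^2+\beta_2(x+\frac\eta{b_0})(\frac1{b_0}D_x+\frac{\beta_3}{2b_0}z^2-D_\eta)-\frac12\alpha_2(\frac1{b_0}D_x+\frac{\beta_3}{2b_0}z^2-D_\eta)^2+\cdots\big)^2$ inside $P_4$, and on the other side by $P_3$ composed with the order-$2$ part of $v_1$. The construction of $v_1$ via \eqref{eq:step4a} is precisely a Feshbach/Grushin reduction, and the claim is that these two families of terms exactly cancel in $-\langle P_4u_0,\cdot\rangle-\langle P_3u_1,\cdot\rangle$. (For instance, using $(\mathfrak h_3-\lambda_2)\psi_{j\pm1}=\pm2\Lambda_2\psi_{j\pm1}$ and the $z$-recursions, the $D_\eta^4$-coefficient of $-\langle P_4u_0,\phi_k\psi_j\rangle/(\|H_k\|^2\|H_j\|^2)$ is $-\frac14\alpha_2^2$, while $-\langle P_3u_1,\phi_k\psi_j\rangle/(\|H_k\|^2\|H_j\|^2)$ contributes $+\frac14\alpha_2^2\,D_\eta^4$; similarly for every $\eta^aD_\eta^b$ with $a+b=3$.) This step requires care because $P_3$, $P_4$, $v_1$ and $U_2$ each have many terms and the cancellation must be verified term by term.

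Next I read off the surviving order-$2$ part. Collecting coefficients of $\eta^2$, $\eta D_\eta+D_\eta\eta$ and $D_\eta^2$ gives exactly
\[
\frac{2k+1}{2b_0}\,\frac1{q_{33}}\Big[(q_{22}q_{33}-q_{23}^2)D_\eta^2-\frac1{b_0}(q_{12}q_{33}-q_{13}q_{23})(\eta D_\eta+D_\eta\eta)+\frac1{b_0^2}(q_{11}q_{33}-q_{13}^2)\eta^2\Big],
\]
i.e. the quadratic form of the Schur complement $S$ of the positive matrix $Q$ with respect to its entry $q_{33}$ — which is natural, since the $z$-direction was diagonalized at leading order by the oscillator $\mathfrak h_3$ whose potential coefficient is $\frac{2k+1}{2b_0}q_{33}$ (see \eqref{tnf17}). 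As a consistency check, this operator has discriminant $\big(\frac{2k+1}{2b_0}\big)^2\det S/b_0^2$ with $\det S=\det Q/q_{33}=b_0^3 d/(2b_0 a)=b_0^2 d/(2a)$, so its spectrum is $(2m+1)\frac{2k+1}{2b_0}(d/2a)^{1/2}$, reproducing the first term of $\mu_{j,k,m,4}$; positivity of $S$ (hence of this quadratic part) is guaranteed by $Q>0$, which is assumption \eqref{ass3}. The remaining order-$1$ and order-$0$ parts of the operator are what one names $\alpha D_\eta+\beta\eta$ and $\gamma$.

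Finally, the shape \eqref{eq:alpha}, \eqref{eq:gamma} follows by parity bookkeeping in $(x,z)$. Each occurrence of $z^2$ or $D_z^2$ contributes, after $\langle\cdot,\psi_j\rangle_z$, a factor affine in $(2j+1)$ (since $\langle z^2\psi_j,\psi_j\rangle$ and $\langle D_z^2\psi_j,\psi_j\rangle$ are proportional to $2j+1$), and each $x^2$, $D_x^2$ or $xD_x$ contributes a factor affine in $(2k+1)$ after $\langle\cdot,\phi_k\rangle_x$; an odd number of $z,D_z$ yields a nonzero pairing only together with exactly one raising/lowering in $z$, which accounts both for the single $(2j+1)$ and for the overall $\Lambda_2\propto(2k+1)^{1/2}$ attached to the first-order term, giving $\alpha=\bar\alpha(2k+1)^{1/2}(2j+1)$, $\beta=\bar\beta(2k+1)^{1/2}(2j+1)$. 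For $\gamma$, at most two $z$-factors and at most two $x$-factors can be active simultaneously and parity forbids a surviving $(2j+1)(2k+1)$ cross term, so $\gamma$ is affine in $(2j+1)^2$ and $(2k+1)^2$, i.e. of the form \eqref{eq:gamma}; the constants $\bar\alpha,\bar\beta,\gamma_0,\gamma_1,\gamma_2$ are the (lengthy) explicit polynomials in $\alpha_2,\beta_1,\beta_2,\beta_3,b_0$ and the coefficients $q_{ij}$, $q_{ijk}$, $p_{ijk}$, $r_{ijk}$, $\delta_j$. The main obstacle is the sheer volume and sign-tracking of the second and third steps: dozens of monomials, many carrying factors of $i$ from the Fourier transform in $y$ and from the gauge factor $\exp(-i\frac{\beta_3}{2b_0}\eta z^2)$, must be reduced and summed, and every term of order $\ge 3$ in $(\eta,D_\eta)$ must be seen to disappear; a convenient running check is that the operator in \eqref{e:chi0} must be formally self-adjoint, which forces the symmetrization $\eta D_\eta+D_\eta\eta$ and the reality of $\alpha,\beta,\gamma$.
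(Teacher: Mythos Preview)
Your outline matches the paper's approach exactly: the paper states only that ``the proof of Lemma~\ref{l:5ort} is given by a long routine computation and will be omitted,'' and what you describe is precisely that computation, organized around the Hermite recursions in $x$ and $z$ and the Feshbach-type cancellations coming from the definition of $v_1$ and $U_2$. Your identification of the second-order part as the Schur complement of $Q$ relative to $q_{33}$ is a nice structural remark that the paper does not make explicit; it explains transparently why the discriminant $AC-B^2$ equals $\tfrac{(2k+1)^2}{4b_0^2}\tfrac{d}{2a}$, which the paper obtains only a posteriori.

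One point deserves a caveat. Your ``parity bookkeeping'' for \eqref{eq:gamma} is persuasive for the shape \eqref{eq:alpha}, but the argument you give for the absence of a $(2j+1)(2k+1)$ cross term in $\gamma$ is not a parity statement in any obvious sense: a term with two $x$-factors and two $z$-factors (think of $c_{33}^2x^2z^4$ in $\widetilde P_4$, or the products arising from $\langle P_2U_2,\phi_k\psi_j\rangle$) is even in both variables and a priori produces exactly such a cross term. That such cross terms either vanish or can be absorbed into the stated form is a fact of the full calculation, not a consequence of symmetry; your sketch correctly flags the ``sheer volume and sign-tracking'' as the real obstacle, and this is one of the places where only the explicit computation settles the matter.
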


The proof of Lemma~\ref{l:5ort} is given by a long routine
computation and will be omitted.

By Lemma~\ref{l:5ort}, $\lambda_4$ is an eigenvalue of the second
order differential operator
\begin{multline*}
{\mathfrak h}_5:=
\frac{2k+1}{2b_0}\frac{q_{22}q_{33}-q^2_{23}}{q_{33}}D^2_\eta-\frac{2k+1}{2b^2_0}
\frac{q_{12}q_{33}-q_{13}q_{23}}{q_{33}}(\eta
D_\eta+D_\eta\eta)\\+\frac{2k+1}{2b_0^3}\frac{q_{11}q_{33}-q_{13}^2}{q_{33}}\eta^2
+\alpha D_\eta+\beta \eta+\gamma\,.
\end{multline*}
This operator is a globally elliptic operator in ${\mathbb R}$.
Therefore, it has discrete spectrum in $L^2(\mathbb R)$, which is
described by the next lemma.
\begin{lem}\label{l:harm}
The eigenvalues of the operator
\[
\mathcal H=AD^2_\eta +B(\eta D_\eta+D_\eta\eta) +C\eta^2 +\alpha
D_\eta+\beta \eta+\gamma,\quad AC-B^2>0\,,
\]
are given by
\[
\hat \lambda_m=
(2m+1)\sqrt{AC-B^2}+\gamma-\frac{1}{4(AC-B^2)}(C\alpha^2-2B\alpha\beta+A\beta^2)\,,
\quad m\in\mathbb N\,.
\]
\end{lem}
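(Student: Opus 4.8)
The plan is to recognise $\mathcal H$ as the Weyl quantisation of the quadratic-plus-linear symbol
\[
\sigma(\eta,p)=Ap^2+2B\eta p+C\eta^2+\alpha p+\beta\eta+\gamma ,
\]
where we use $\mathrm{Op}^W(\eta p)=\tfrac12(\eta D_\eta+D_\eta\eta)$, and then to simplify $\mathcal H$ by unitary transformations that preserve the spectrum: first a phase-space translation killing the linear terms, then a metaplectic transformation reducing the remaining positive definite quadratic form to a multiple of the harmonic oscillator $\mathfrak h_1$ of \eqref{tnf13}. In the situation of interest $Q$ is positive definite, so $A>0$; together with $AC-B^2>0$ this makes $M=\begin{pmatrix}C&B\\B&A\end{pmatrix}$ positive definite, and I will treat this case (the sign conventions being otherwise analogous).

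First I would conjugate $\mathcal H$ by the Heisenberg (phase-space translation) unitary $T_{v_0}$ attached to the vector $v_0=-\tfrac12 M^{-1}(\beta,\alpha)^{\mathrm T}$. Since such a conjugation replaces the Weyl symbol $\sigma(\eta,p)$ by $\sigma(\eta+\eta_0,p+p_0)$, expansion of the quadratic form shows that the linear-in-$(\eta,p)$ part of the new symbol is $\bigl(2Mv_0+(\beta,\alpha)^{\mathrm T}\bigr)^{\mathrm T}(\eta,p)^{\mathrm T}$, which vanishes by the choice of $v_0$, while the new constant term is $\bigl(\tfrac14-\tfrac12\bigr)(\beta,\alpha)M^{-1}(\beta,\alpha)^{\mathrm T}=-\tfrac14(\beta,\alpha)M^{-1}(\beta,\alpha)^{\mathrm T}$. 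Using $M^{-1}=(AC-B^2)^{-1}\begin{pmatrix}A&-B\\-B&C\end{pmatrix}$ this equals $-\tfrac{1}{4(AC-B^2)}(C\alpha^2-2B\alpha\beta+A\beta^2)$, exactly the non-$(2m+1)$, non-$\gamma$ contribution in the asserted formula. Hence $\mathcal H$ is unitarily equivalent to $\mathcal Q+\tilde\gamma$, where $\mathcal Q=AD_\eta^2+B(\eta D_\eta+D_\eta\eta)+C\eta^2$ and $\tilde\gamma=\gamma-\tfrac{1}{4(AC-B^2)}(C\alpha^2-2B\alpha\beta+A\beta^2)$.

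Next I would reduce $\mathcal Q=\mathrm{Op}^W\bigl((\eta,p)M(\eta,p)^{\mathrm T}\bigr)$. By the metaplectic representation — or, explicitly, by a rotation and dilations in the $(\eta,D_\eta)$ phase plane (a unitary dilation equalising the coefficients of $D_\eta^2$ and $\eta^2$, then the rotation generated by the harmonic oscillator sending $2\eta p$ to $p^2-\eta^2$ and so removing the cross term, then a final dilation) — the operator $\mathcal Q$ is unitarily equivalent to $\mu\,(D_\eta^2+\eta^2)$ with $\mu$ the symplectic eigenvalue of $M$, namely $\mu=\sqrt{\det M}=\sqrt{AC-B^2}$. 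On the symbol side this is merely the statement that a linear symplectic change acts on $M$ by $\mathrm{Sp}(2,\RR)$-congruence, whose only invariant in one degree of freedom is $\det M$. Comparing with $\mathfrak h_1=D_x^2+b_0^2x^2$, whose spectrum is $\{(2k+1)b_0:k\in\NN\}$, gives $\mathrm{Sp}(\mathcal Q)=\{(2m+1)\sqrt{AC-B^2}:m\in\NN\}$. Combining the two steps yields $\mathrm{Sp}(\mathcal H)=\mathrm{Sp}(\mathcal Q)+\tilde\gamma=\{\hat\lambda_m:m\in\NN\}$ with
\[
\hat\lambda_m=(2m+1)\sqrt{AC-B^2}+\gamma-\tfrac{1}{4(AC-B^2)}(C\alpha^2-2B\alpha\beta+A\beta^2),
\]
as claimed.

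The only genuinely delicate point is the bookkeeping in the metaplectic reduction: one must carry out the symplectic normalisation so that the prefactor comes out as $\sqrt{AC-B^2}$ and so that one lands on the harmonic-oscillator spectrum $(2m+1)$ rather than $m+\tfrac12$. Everything else is elementary linear algebra, and the identity $AC-B^2>0$ is precisely what guarantees $|B|<\sqrt{AC}$, so that all the square roots above are real and the reduction stays in the elliptic regime.
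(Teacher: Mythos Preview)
Your argument is correct, but the paper takes a different and more direct route. Rather than interpreting $\mathcal H$ through its Weyl symbol and then performing a phase-space translation followed by a metaplectic normal-form reduction, the paper simply writes down an explicit factorisation
\[
\mathcal H = XX^{+} + \sqrt{AC-B^2} + \gamma - |z|^2,
\]
with $X = A^{1/2}D_\eta + A^{-1/2}(B+i\sqrt{AC-B^2})\eta + z$ (and $X^+$ its formal adjoint) for a suitable complex constant $z$, checks the commutation relation $[X^+,X]=2\sqrt{AC-B^2}$, and then reads off the spectrum from the standard ladder-operator computation for the harmonic oscillator. The two proofs are of course the same idea in different clothing: your Heisenberg translation is encoded in the paper by the additive constant $z$ in $X$, and your metaplectic reduction corresponds to the choice of the coefficients of $D_\eta$ and $\eta$ in $X$. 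Your approach has the advantage of explaining structurally where the term $-\tfrac{1}{4(AC-B^2)}(C\alpha^2-2B\alpha\beta+A\beta^2)$ comes from (it is $-\tfrac14(\beta,\alpha)M^{-1}(\beta,\alpha)^{\mathrm T}$, the value of the quadratic form at its minimum), and of making transparent why only $\sqrt{\det M}$ survives; the paper's approach is shorter and entirely self-contained, requiring no appeal to the metaplectic representation or to Williamson's normal form.
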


\begin{proof}
The operator $\mathcal H$ can be written as
\[
\mathcal H=XX^{+}+\sqrt{AC-B^2}+\gamma-|z|^2\,,
\]
where
\begin{align*}
X=& A^{1/2}D_\eta+\frac{B+i\sqrt{AC-B^2}}{A^{1/2}}\eta+z, \\
X^+=& A^{1/2}D_\eta+\frac{B-i\sqrt{AC-B^2}}{A^{1/2}}\eta+\bar z,\\
z =& \frac{1}{2A^{1/2}}\alpha+i\frac{A\beta-B\alpha
}{2A^{1/2}\sqrt{AC-B^2}},
\end{align*}
and we have
\[
[X^+,X]=2\sqrt{AC-B^2}\,.
\]
Therefore, the lemma is immediately proved by following a well-known
computation of the spectrum of the harmonic oscillator by means of
creation and annihilation operators.
\end{proof}

For ${\mathfrak h}_5$, we have
\[
A=\frac{2k+1}{2b_0}\frac{q_{22}q_{33}-q^2_{23}}{q_{33}}\,,
B=-\frac{2k+1}{2b^2_0} \frac{q_{12}q_{33}-q_{13}q_{23}}{q_{33}}\,,
C=\frac{2k+1}{2b_0^3}\frac{q_{11}q_{33}-q_{13}^2}{q_{33}}\,,
\]
and $\alpha,\beta$ and $\gamma $ are given by \eqref{eq:alpha} and
\eqref{eq:gamma}. In particular, by \eqref{eq:Q}, we have
\[
AC-B^2=\frac{(2k+1)^2}{4b^4_0}\frac{\det
Q}{q_{33}}=\frac{(2k+1)^2}{4b^2_0}\frac{d}{2a}.
\]

By Lemma~\ref{l:harm}, the eigenvalues of ${\mathfrak h}_5$ have the
form
\begin{equation}\label{e:lambda4}
\hat \lambda_m
=\frac{1}{2b_0}\left(\frac{d}{2a}\right)^{1/2}(2k+1)(2m+1)+
\nu(j,k)\,,\quad m\in {\mathbb N},
\end{equation}
where
\begin{align*}
\nu(j,k)=& \gamma-\frac{1}{4(AC-B^2)}(C\alpha^2-2B\alpha\beta+A\beta^2)\\
=&\nu_{22}(2k+1)^2+\nu_{11}(2j+1)^2+\nu_0\,,
\end{align*}
with
\[
\nu_{22}=\gamma_2, \quad \nu_0=\gamma_0,
\]
and
\[
\nu_{11}=\gamma_1-\frac{a}{d}
\left(\frac{1}{b_0}\frac{q_{11}q_{33}-q_{13}^2}{q_{33}}
\bar\alpha^2+2
\frac{q_{12}q_{33}-q_{13}q_{23}}{q_{33}}\bar\alpha\bar\beta
+b_0\frac{q_{22}q_{33}-q^2_{23}}{q_{33}}\bar\beta^2\right)\,.
\]

Taking $\lambda_4$ as the $m$-th eigenvalue of ${\mathfrak h}_5$ and
$\chi_0$ as the corresponding eigenfunction, we obtain that the
condition \eqref{eq5ort} holds. Therefore, we can find a solution
$v_2$ to \eqref{eq:step5a}. Then the condition \eqref{eq:step5}
holds, that allows us to find a solution $u_4$ to \eqref{eq5}.

Thus, for any $j,k,m\in \mathbb N$, we have proved the existence of
a solution $u_\ell, \ell=0,1,2,3,4,$ to the system of equations
\eqref{eq1}--\eqref{eq5} with a suitable choice of $\lambda_\ell,
\ell=0,1,2,3,4$ given by \eqref{tnf12}, \eqref{tnf15},
\eqref{tnf18}, \eqref{e:lambda3} and \eqref{e:lambda4}. Setting
\[
u^h(x,\eta,z)= \sum_{\ell=0}^4 h^{\frac \ell 4} u_\ell(x,\eta,z)\,,
\quad \lambda^h= \sum_{\ell=0}^4 h^{\frac \ell 4} \lambda_\ell,
\]
we obtain
\[
P^hu^h-\lambda^hu^h=\mathcal O(h^\frac 54).
\]
The functions $u_\ell$ have sufficient decay properties. Therefore,
by changing back to the original coordinates and multiplying by a
fixed cut-off function, we obtain the desired functions
$\phi^h_{j,k,m}$, which satisfy \eqref{e:Hh} with
\[
\mu^h_{j,k,m}= \sum_{\ell=0}^4 h^{\frac \ell 4} \mu_{j,k,m,\ell},
\]
where
\[
 \mu_{j,k,m,\ell}=\lambda_\ell, \quad \ell =0,1,2,3,4.
 \]

The functions $u_0$ for different $j$, $k$ and $m$ are orthogonal.
Since each change of variables, which we use, is unitary, this
implies the condition~\eqref{e:orth}.

\section{Periodic case and spectral gaps}\label{s:gaps}
In this section, we apply the previous results to the problem of
existence of gaps in the spectrum of a periodic magnetic
Schr\"odinger operator. For related results on spectral gaps for
periodic magnetic Schr\"odinger operators, see \cite{luminy} and
references therein.

Consider the Schr\"odinger operator with magnetic potential in the
entire Euclidean space ${\mathbb R}^3$
\[
H^h=\left(hD_{X_1}-A_1(X)\right)^2+\left(hD_{X_2}-A_2(X)\right)^2+\left(hD_{X_3}-A_3(X)\right)^2.
\]
We assume that the vector magnetic field $\vec{B}=(B_1,B_2,B_3)$ is
$\Gamma$-invariant for some cocompact lattice $\Gamma \subset
{\mathbb R}^3$.

Put
\[
b_0=\min \{|\vec B(X)|\, :\, X\in {\mathbb R}^3\}
\]
and assume that there exist a (connected) fundamental domain $Q$ for
$\Gamma$ and a constant $\epsilon_0>0$ such that
\[
 |B(X)| \geq b_0+\epsilon_0, \quad x\in \partial Q\,.
\]

We will consider the operator $H^h$ as an unbounded self-adjoint
operator in the Hilbert space $L^2({\mathbb R}^3)$. Using the
results of \cite{diff2006}, one can immediately derive from
Theorem~\ref{t:qmodes} the following result on existence of gaps in
the spectrum of $H^h$ in the semiclassical limit.

\begin{thm}~\\
Suppose that
\[
b_0>0\,,
\]
 and that there exists a unique minimum $X_0\in Q$ such that
  $|B(X_0)|=b_0$, which is non-degenerate: in some neighborhood of $X_0$
\[
C^{-1}|X-X_0|^2\leq |\vec B(X)|-b_0 \leq C |X-X_0|^2\,.
\]
Then, for any natural $j$, $k$ and $N$, there exist $h_{j,k,N}>0$,
$c_{j,k}$ and $C_{j,k,N}$ such that
\[
C_{j,k,N}>c_{j,k}+\frac{1}{b_0}\left(\frac{d}{2a}\right)^{1/2}(2k+1)N,
\]
and the spectrum of $H^h$ in the interval $[A_{j,k},B_{j,k,N}]$
where
\[
A_{j,k}=(2k+1)b_0 + (2j+1)(2k+1)^{1/2}a^{1/2}h^{3/2}+ h^2c_{j,k}
\]
and
\[
B_{j,k,N}= (2k+1)b_0 + (2j+1)(2k+1)^{1/2}a^{1/2}h^{3/2} +
h^2C_{j,k,N}\,,
\]
has at least $N$ gaps for any $h\in ]0, h_{j,k,N}]\,$.
\end{thm}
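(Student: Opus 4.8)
The plan is to derive the theorem from Theorem~\ref{t:qmodes} together with the abstract spectral-gap results for periodic operators in \cite{diff2006}. Fix $j$, $k$ and the natural number $N$. Since $\vec B$ is $\Gamma$-periodic, the quasimodes $\phi^h_{j,k,m}$ of Theorem~\ref{t:qmodes}, which are supported in a fixed small neighborhood of $X_0$, can be transported by lattice translations (together with the corresponding gauge transformations) to quasimodes localized near each well $\gamma\cdot X_0$, $\gamma\in\Gamma$. For $m=0,1,\dots,N$ they furnish approximate eigenvalues
\[
\mu^h_{j,k,m}=(2k+1)b_0h+(2j+1)(2k+1)^{1/2}a^{1/2}h^{3/2}+h^2\Big[\tfrac{1}{2b_0}\big(\tfrac{d}{2a}\big)^{1/2}(2m+1)(2k+1)+\nu(j,k)\Big]
\]
with residual $O(h^{9/4})$, and consecutive values differ by $\tfrac{1}{b_0}(d/2a)^{1/2}(2k+1)\,h^{2}$, a spacing much larger than the residual once $h$ is small.

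The machinery of \cite{diff2006} (which proceeds through the Floquet--Bloch decomposition $H^h=\int^\oplus H^h_\theta\,d\theta$) reduces the existence of gaps to two estimates. The first, an upper estimate, is exactly the quasimode bound above: it shows that in the window $[A_{j,k},B_{j,k,N}]$ the spectrum of $H^h$ approaches each $\mu^h_{j,k,m}$ within $o(h^{2})$. The second is a confinement estimate: inside this window the spectrum is, up to $o(h^{2})$, exhausted by small neighborhoods of the $\mu^h_{j,k,m}$. For the latter I would use a $\Gamma$-periodic IMS partition of unity: away from a fixed neighborhood of the wells $\Gamma\cdot X_0$, the condition $|\vec B|\ge b_0+\epsilon_0$ on $\partial Q$ together with the lower bounds underlying the reduction to the $k$-th Landau level pushes the local operator above $B_{j,k,N}$ for small $h$; near each well, after the normal-form reduction of Sections~\ref{s:prelim1}--\ref{s:prelim2} and the rescaling, the operator agrees modulo $O(h^{9/4})$ with a copy of the one-well model, whose spectrum in the window clusters within $O(h^{9/4})$ of the $\mu^h_{j,k,m}$. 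Reassembling via the partition of unity yields
\[
\mathrm{Spec}(H^h)\cap[A_{j,k},B_{j,k,N}]\subset\bigcup_{m=0}^{N}\big(\mu^h_{j,k,m}-\varepsilon_h,\ \mu^h_{j,k,m}+\varepsilon_h\big),\qquad \varepsilon_h=o(h^{2}),
\]
while nonemptiness near each $\mu^h_{j,k,m}$ comes from the quasimodes.

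It remains to choose the constants. Take $c_{j,k}$ slightly below $\nu(j,k)+\tfrac{1}{2b_0}(d/2a)^{1/2}(2k+1)$, so that $A_{j,k}<\mu^h_{j,k,0}-\varepsilon_h$, and $C_{j,k,N}$ slightly above $\nu(j,k)+\tfrac{1}{2b_0}(d/2a)^{1/2}(2N+1)(2k+1)$, so that $B_{j,k,N}>\mu^h_{j,k,N}+\varepsilon_h$. Then
\[
C_{j,k,N}-c_{j,k}>\tfrac{1}{2b_0}\big(\tfrac{d}{2a}\big)^{1/2}\big[(2N+1)-1\big](2k+1)=\tfrac{1}{b_0}\big(\tfrac{d}{2a}\big)^{1/2}(2k+1)N,
\]
which is the asserted inequality, and the intervals $\big(\mu^h_{j,k,m}+\varepsilon_h,\ \mu^h_{j,k,m+1}-\varepsilon_h\big)$ for $m=0,\dots,N-1$ are $N$ disjoint nonempty subintervals of $[A_{j,k},B_{j,k,N}]$ containing no spectrum of $H^h$.

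The main obstacle is the confinement estimate: Theorem~\ref{t:qmodes} delivers only upper bounds via the max--min principle, so the genuine work is to rule out spectrum strictly between consecutive clusters. This is precisely what must be imported from \cite{diff2006}; verifying that the present setting --- a single non-degenerate magnetic well per period, with the delicate $h^{3/2}$ and $h^{2}$ corrections and $O(h^{9/4})$ errors --- satisfies its hypotheses requires some care, notably uniformity in the Floquet parameter $\theta$ and the fact that the band functions vary only by $o(h^{2})$ across the Brillouin zone because the wells $\gamma\cdot X_0$ lie at a fixed positive distance from one another, so inter-well tunneling is negligible at the relevant scale.
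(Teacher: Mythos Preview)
Your approach matches the paper's, whose entire proof of this theorem is the single sentence preceding the statement: the result follows immediately from Theorem~\ref{t:qmodes} together with the results of \cite{diff2006}. Your elaboration---transporting the quasimodes to every lattice well, the Floquet--Bloch description, the IMS localization and confinement step, and the explicit choice of $c_{j,k}$ and $C_{j,k,N}$ yielding the inequality on $C_{j,k,N}-c_{j,k}$---is a reasonable unpacking of what that reference is meant to supply, and you correctly flag that the confinement (lower-bound) part is the genuinely nontrivial imported ingredient, since in the present paper only the quasimode side is proved and the matching lower bound is left as a conjecture in the Remark after Theorem~\ref{t:qmodes}.
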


\appendix
\section{Hermite polynomials}
For a fixed $\lambda>0$, put
\[
h_{m}(t)=\lambda^{1/2} H_{m}(\lambda^{1/2}t) e^{-\lambda t^2/2}\,.
\]
Here we gather some well-known formulae, concerning to the Hermite
functions $h_m$, which we need in the paper:
\begin{align*}
th_m(t)= & \frac{1}{2\lambda^{1/2}}[h_{m+1}(t)+2mh_{m-1}(t)]\,.\\
t^2h_m(t)= & \frac{1}{4\lambda}[h_{m+2}(t)+(4m+2)h_m(t)+4m(m-1)h_{m-2}(t)]\,.\\\
t^3h_m(t)= &
\frac{1}{8\lambda^{3/2}}[h_{m+3}(t)+(6m+6)h_{m+1}(t)+12m^2h_{m-1}(t)\\ & +8m(m-1)(m-2)h_{m-3}(t)]\,.\\
D_{t}h_{m}(t)=& \frac12\lambda^{1/2}i[h_{m+1}(t)-2mh_{m-1}(t)]\,.
\end{align*}

\[
\langle (tD_t+D_tt)h_{m},h_m\rangle =0\,.
\]

\begin{align*}
(D_{t}{t}^2+ t^2D_{t})h_m(t) =\frac{1}{4\lambda^{1/2}}i &
[h_{m+3}(t)+(2m+2)h_{m+1}(t) -4m^2h_{m-1}(t)\\ &
-8m(m-1)(m-2)h_{m-3}(t)]\,.
\end{align*}

\begin{multline*}
(t^4D_{t}+D_tt^4)h_{m}(t)\\
\begin{aligned}
 = \frac{1}{16\lambda^{3/2}}i & [
h_{m+5}(t)+(6m+12)h_{m+3}(t)+4(2m^2+4m+3)h_{m+1}(t)\\ &
-8(2m^2+1)mh_{m-1}(t) -48m(m-1)^2(m-2)h_{m-3}(t)\\ &
-32m(m-1)(m-2)(m-3)(m-4)h_{m-5}(t)]\,.
\end{aligned}
\end{multline*}

\end{document}